%%
%% Copyright 2007, 2008, 2009 Elsevier Ltd
%%
%% This file is part of the 'Elsarticle Bundle'.
%% ---------------------------------------------
%%
%% It may be distributed under the conditions of the LaTeX Project Public
%% License, either version 1.2 of this license or (at your option) any
%% later version.  The latest version of this license is in
%%    http://www.latex-project.org/lppl.txt
%% and version 1.2 or later is part of all distributions of LaTeX
%% version 1999/12/01 or later.
%%
%% The list of all files belonging to the 'Elsarticle Bundle' is
%% given in the file `manifest.txt'.
%%

%% Template article for Elsevier's document class `elsarticle'
%% with harvard style bibliographic references
%% SP 2008/03/01
%%
%%
%%
%% $Id: elsarticle-template-harv.tex 4 2009-10-24 08:22:58Z rishi$
%%
%%
\documentclass[preprint,12pt]{elsarticle}

%% Use the option review to obtain double line spacing
%% \documentclass[authoryear,preprint,review,12pt]{elsarticle}

%% Use the options 1p,twocolumn; 3p; 3p,twocolumn; 5p; or 5p,twocolumn
%% for a journal layout:
%% \documentclass[final,authoryear,1p,times]{elsarticle}
%% \documentclass[final,authoryear,1p,times,twocolumn]{elsarticle}
%% \documentclass[final,authoryear,3p,times]{elsarticle}
%% \documentclass[final,authoryear,3p,times,twocolumn]{elsarticle}
%% \documentclass[final,authoryear,5p,times]{elsarticle}
%% \documentclass[final,authoryear,5p,times,twocolumn]{elsarticle}

%% if you use PostScript figures in your article
%% use the graphics package for simple commands
%% \usepackage{graphics}
%% or use the graphicx package for more complicated commands
\usepackage{graphicx}
%% or use the epsfig package if you prefer to use the old commands
%% \usepackage{epsfig}

%% The amssymb package provides various useful mathematical symbols
\usepackage{amssymb}
\usepackage[latin1]{inputenc}
\usepackage{amsmath}
\usepackage{amsfonts}
\usepackage{graphicx}
\usepackage{color}
\usepackage{verbatim}
\usepackage{amsthm}

\newcommand{\be}{\begin{eqnarray}}
\newcommand{\e}{\end{eqnarray}}
\newcommand{\bex}{\begin{eqnarray*}}
\newcommand{\ex}{\end{eqnarray*}}
\newcommand{\mbb}{\mathbb}

\newcommand{\mbf}{\mathbf}
\newcommand{\mcal}{\mathcal}
\newcommand{\la}{\langle}
\newcommand{\ra}{\rangle}
\newcommand{\p}{\partial}

\newtheorem{theo}{Theorem}%[section]

\newtheorem{lem}[theo]{Lemma}
\newtheorem{prop}[theo]{Proposition}

\newtheorem{cor}[theo]{Corollary}

\newtheorem*{theo*}{Theorem}
%% The amsthm package provides extended theorem environments
%% \usepackage{amsthm}

%% The lineno packages adds line numbers. Start line numbering with
%% \begin{linenumbers}, end it with \end{linenumbers}. Or switch it on
%% for the whole article with \linenumbers after \end{frontmatter}.
%% \usepackage{lineno}

%% natbib.sty is loaded by default. However, natbib options can be
%% provided with \biboptions{...} command. Following options are
%% valid:

%%   round  -  round parentheses are used (default)
%%   square -  square brackets are used   [option]
%%   curly  -  curly braces are used      {option}
%%   angle  -  angle brackets are used    <option>
%%   semicolon  -  multiple citations separated by semi-colon (default)
%%   colon  - same as semicolon, an earlier confusion
%%   comma  -  separated by comma
%%   authoryear - selects author-year citations (default)
%%   numbers-  selects numerical citations
%%   super  -  numerical citations as superscripts
%%   sort   -  sorts multiple citations according to order in ref. list
%%   sort&compress   -  like sort, but also compresses numerical citations
%%   compress - compresses without sorting
%%   longnamesfirst  -  makes first citation full author list
%%
%% \biboptions{longnamesfirst,comma}

% \biboptions{}

%\journal{Stochastic Processes and Applications}
%\journal{}

\begin{document}

\begin{frontmatter}

%% Title, authors and addresses

%% use the tnoteref command within \title for footnotes;
%% use the tnotetext command for the associated footnote;
%% use the fnref command within \author or \address for footnotes;
%% use the fntext command for the associated footnote;
%% use the corref command within \author for corresponding author footnotes;
%% use the cortext command for the associated footnote;
%% use the ead command for the email address,
%% and the form \ead[url] for the home page:
%%
\title{The total mass of super-Brownian motion upon exiting balls and Sheu's compact support condition}
%% \title{Title\tnoteref{label1}}
%% \tnotetext[label1]{}
 \author{Marion Hesse}
 \ead{m.hesse@bath.ac.uk}
 \author{Andreas Kyprianou}
  \ead{a.kyprianou@bath.ac.uk}
%% \ead[url]{home page}
%% \fntext[label2]{}
%% \cortext[cor1]{}
 \address{University of Bath}
%% \fntext[label3]{}

%% use optional labels to link authors explicitly to addresses:
%% \author[label1,label2]{<author name>}
%% \address[label1]{<address>}
%% \address[label2]{<address>}

\begin{abstract}

We study the total mass of a $d$-dimensional super-Brownian motion as it first exits an increasing sequence of balls.
The process of the total mass  is a time-inhomogeneous continuous-state branching process, where the increasing radii of the  balls are taken as the time parameter. We are able to characterise its time-dependent branching mechanism and show that it converges, as time goes to infinity, towards the branching mechanism of the total mass of a one-dimensional super-Brownian motion as it first crosses above an increasing sequence of levels. \\
Our results allow us to identify the compact support criterion given in Sheu (1994) as a classical Grey condition (1974) for the aforementioned limiting branching mechanism. 
\end{abstract}

\begin{keyword}
Super-Brownian motion, exit measures \sep time-dependent continuous state branching processes \sep compact support condition.
%% keywords here, in the form: keyword \sep keyword

\MSC[2000] 60J68 %superprocesses
\sep 60J80. %branching processes
%% MSC codes here, in the form: \MSC code \sep code
%% or \MSC[2008] code \sep code (2000 is the default)

\end{keyword}

\end{frontmatter}

% \linenumbers
%%%%%%

\section{Introduction and main results}
Suppose that $X=(X_{t},{t}\geq 0)$ is a super-Brownian motion in $\mbb{R}^d$, $d\geq 1$, with 
general branching mechanism $\psi$ of the form 
\be\label{eq-branchingmechpsi}
\psi(\lambda) = - \alpha \lambda + \beta \lambda^2 + \int_{(0,\infty)} (e^{-\lambda x} - 1 + \lambda x ) \Pi({\rm d}x), \ \ \lambda\geq 0,
\e
where $\alpha=-\psi'(0+) \in (-\infty,\infty)$, $\beta\geq 0$ and $\Pi$ is a measure concentrated on $(0,\infty)$ which satisfies $\int_{(0,\infty)} (x\wedge x^2) \Pi({\rm d}x) <\infty$. Assume $\psi(\infty)=\infty$.
Denote by $\mbf{P}_\mu$ the law of $X$ with initial configuration according to $\mu\in\mcal{M}_F(
\mbb{R}^d)$, the space of finite measures on $\mbb{R}^d$ with compact support. We write $\mcal{M}_F(D)$ for the space of finite measures supported on $D\subset \mbb{R}^d$.\\
A construction of superprocesses with a general branching mechanism $\psi$ as in (\ref{eq-branchingmechpsi}) can be found in  Fitzsimmons \cite{fitzsimmons1988construction}, see also Section 2.3 in Li \cite{libook} which provides a comprehensive account on the theory of superprocesses.\\ 
We call $X$ (sub)critical if $\psi'(0+)\geq0$ and supercritical if $\psi'(0+)< 0$.  
Denote the root of $\psi$ by $\lambda^*:=\inf\{\lambda\geq 0 : \psi(\lambda)>0 \}$. In the (sub)critical case, we have $\lambda^*=0$. In the supercritical case, convexity of $\psi$ and the condition $\psi(\infty)=\infty$ ensure that there is a unique and finite $\lambda^*>0$. In both cases, 
\bex
\mbf{P}_{\mu}( \lim_{t\to\infty} ||X_t||=0 ) = e^{-\lambda^* ||\mu||},
\ex
where $||\mu||$ denotes the total mass of the measure $\mu\in\mcal{M}_F(\mbb{R}^d)$. 

We want to study the total mass of the super-Brownian motion $X$ upon its first exit from an increasing sequence of balls. Fix an initial radius $r>0$ and let $D_{{s}}:=\{x\in \mbb{R}^d : ||x||< {s}\}$ be the open  ball of radius ${s}\geq{r}$ around the origin. According to Dynkin's theory of exit measures \cite{dynkinbemsp01}, we can describe the mass of $X$ as it first exits the growing sequence of balls $(D_{{s}}, {s}\geq{r})$ as a sequence of random measures on $\mbb{R}^d$, known as branching Markov exit measures. We denote this sequence of branching Markov exit measures by $\{X_{D_{s}}, {s}\geq r\}$. Informally, the  measure $X_{D_{s}}$  is  supported on the boundary $\p D_{s}$ and it is obtained by `freezing' mass of the super-Brownian motion when it first hits 
$\p D_{{s}}$.  \\
Formally,  $\{X_{D_{s}}, {s}\geq r\}$ is characterised by the following branching Markov property, see for instance Section 1.1 in Dynkin and Kuznetsov \cite{dynkinkuznetsovnmeasures04}.  Let $\mu\in\mcal{M}_F(D_r)$ and, for ${z}\geq {r}$, define $\mcal{F}_{D_z} :=\sigma(X_{D_{z'}}, r\leq {z'} \leq {z})$.  For any positive, bounded, continuous function $f$ on $\p D_{s}$,
\be\label{eq-strongmarkov}
\mbf{E}_{\mu}[e^{-\la f, X_{D_{s}}\ra}|\mcal{F}_{D_{z}}] = e^{- \la v_f(\cdot,{s}), X_{D_{z}} \ra}, \ \ \ 0<r \leq {z}\leq {s},
\e
where the Laplace functional $v_f$ is the unique non-negative solution to
\be\label{eq-semigroupexitmeasures}
v_f(x,{s})= \mbb{E}_{x}[f(\xi_{T_{s}})] - \mbb{E}_{x}\Big[ \int_0^{T_{s}}  \psi(v_f(\xi_{z},{s})) \ {\rm d}{z}\Big],
\e
and $((\xi_{z},{z}\geq 0),\mbb{P}_{x})$ is an $\mbb{R}^d$-Brownian motion with $\xi_0=x$ and with $T_{s}:=\inf\{{z}>0: \xi_{z} \notin D_{s} \}$ denoting its first exit time  from $D_{s}$.
In (\ref{eq-strongmarkov}), we have used the inner product notation $\la f, \mu \ra =\int_{\mbb{R}^d} f(x) \mu({\rm d}x) $. 

For ${s}\geq r$, let $Z_{s}:=||X_{D_{s}}||$  denote the total mass that is `frozen' when it first hits  the boundary of the ball $D_s$. We can then define the total mass process 
$(Z_{s},{s}\geq r)$ which uses the radius $s$ as its time-parameter. Let us write $P_{r}$, for the law of the process $(Z_{s}, {s}\geq r)$ starting at time $r>0$ with unit initial mass.  In case we start with non-unit initial mass $a>0$ we shall use the notation $P_{{a,r}}$ for its law. \\
It is not difficult to see that $Z$ is  a time-inhomogeneous continuous-state branching process and we can characterise it as follows.

\begin{theo}\label{theo-branchingmechpde} 
(i) Let $r>0$.  The process $Z=(Z_{s},{s}\geq r)$ is a time-inhomogeneous continuous-state branching process. This is to say it is a $[0,\infty]$-valued strong Markov process with c\`adl\`ag paths satisfying the branching property
\bex
E_{(a+a'),{r}} [e^{-\theta Z_{s}}] 
&=& E_{a ,r} [e^{-\theta Z_{s}}]E_{a' ,{r}} [e^{-\theta Z_{s}}], 
\ex 
for all $a,a' >0$, $\theta\geq 0$ and ${s}\geq r$.\\ 
(ii) Let $r>0$ and $\mu\in\mcal{M}_F(\partial D_r)$ with $||\mu||=a$. Then, for ${s}\geq r$, we have
\be\label{eq-laplacedef}
E_{{a,r}}[e^{-\theta Z_{s}}]
&=&e^{- u({r},{s},\theta) a} , \ \ \theta\geq 0, 
\e
where the Laplace functional $u({r},{s},\theta)$  satisfies 
\be\label{eq-inhomlaplacefunctional}
u({r},{s},\theta) = \theta - \int_r^{s} \Psi({z},u({z},{s},\theta)) \ {\rm d}{z}, 
\e
for a family of branching mechanisms $(\Psi(r,\cdot), r>0)$ of the form
\be\label{eq-Psi}
\Psi(r,\theta) &=& - q_r + a_r \theta + b_r \theta^2 + \int_{(0,\infty)} (e^{-\theta x} - 1 + \theta x \mbf{1}_{(x<1)}) \Lambda_r({\rm d}x),  
\e
for $\theta\geq 0$, and for each $r>0$ we have $q_r\geq 0$, $a_r\in \mbb{R}$, $b_r\geq 0$ and $\Lambda_r$ is a measure concentrated on $(0,\infty)$ satisfying $\int_{(0,\infty)} (1\wedge x^2) \Lambda_r({\rm d}x)<\infty$.\\
(iii) The branching mechanism $\Psi$ satisfies the PDE
\be\label{eq-pderelationforpsi}
\frac{\partial}{\partial r} \Psi(r,\theta) + \frac{1}{2}\frac{\partial}{\partial \theta} \Psi^2(r,\theta) + \frac{d-1}{r} \Psi(r,\theta) &=& 2 \psi(\theta) \ \ \ \ r>0, \ \theta\in (0,\infty) 
  \nonumber\\
 \Psi(r,\lambda^*)&=& 0, \ \ \ \  r>0.
\e
\end{theo}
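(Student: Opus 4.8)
The plan is to prove the three parts in order, with parts (i) and (ii) following from general theory applied to the exit-measure setup, and part (iii)—the PDE for $\Psi$—being the genuinely new content that I expect to be the main obstacle.

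For part (i), I would observe that the branching property of the exit measures, which is built into the characterisation (\ref{eq-strongmarkov})--(\ref{eq-semigroupexitmeasures}) via the additivity of the log-Laplace functional in the measure argument, immediately gives the stated branching property of $Z_s=\|X_{D_s}\|$ once we know that starting masses can be superposed independently. The strong Markov property of $(Z_s,s\ge r)$ in the radial time parameter follows from the strong Markov property of the exit-measure system (\ref{eq-strongmarkov}) together with the observation that, because the balls $D_s$ are nested and rotationally symmetric, the law of $(Z_{s'},s'\ge s)$ given $\mcal F_{D_s}$ depends on $X_{D_s}$ only through its total mass $\|X_{D_s}\|$—this is where spherical symmetry of Brownian motion enters, ensuring that the function $v_f(\cdot,s)$ solving (\ref{eq-semigroupexitmeasures}) with $f\equiv\theta$ is radial and hence constant on each $\partial D_z$. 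The c\`adl\`ag property and the $[0,\infty]$-valued statement are then standard.

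For part (ii), the representation (\ref{eq-laplacedef}) with an additive exponent $u(r,s,\theta)\,a$ is exactly the translation of the branching property of (i) into Laplace-functional language. To get (\ref{eq-inhomlaplacefunctional}), I would take $f\equiv\theta$ in (\ref{eq-semigroupexitmeasures}), note that by radial symmetry $v_\theta(x,s)=u(\|x\|,s,\theta)$, and substitute into (\ref{eq-strongmarkov}) with $z=r$; identifying $u(r,s,\theta)=v_\theta(x,s)$ for $\|x\|=r$ and differentiating the resulting integral equation in the right variable produces the evolution equation. The key structural claim is that the ``generator'' appearing in this evolution equation is, for each fixed $r$, a branching mechanism of L\'evy--Khintchine type (\ref{eq-Psi}); this I would deduce from the fact that $u(r,s,\theta)$, as a function of $\theta$, is the Laplace exponent of a subprobability/infinitely-divisible family (the total mass is an infinitely divisible random variable for each $s$, by (i)), so its infinitesimal increment in $s$ must have the claimed form—with the possible killing term $-q_r$ accounting for mass escaping to infinity before reaching $\partial D_s$ in the supercritical case. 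The condition on $\Lambda_r$ and signs of $a_r,b_r$ follow from the general classification of CSBP branching mechanisms.

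Part (iii) is the crux. Here I would exploit that $v_\theta(x,s)=u(\|x\|,s,\theta)$ solves (\ref{eq-semigroupexitmeasures}) with $f\equiv\theta$, which in PDE form says that $v_\theta(\cdot,s)$ is $\psi$-harmonic in $D_s$: $\tfrac12\Delta v_\theta=\psi(v_\theta)$ on $D_s$ with boundary value $\theta$ on $\partial D_s$. Writing $u=u(\rho,s,\theta)$ with $\rho=\|x\|$ and using the radial Laplacian $\tfrac12\Delta = \tfrac12\partial_\rho^2 + \tfrac{d-1}{2\rho}\partial_\rho$, this reads $\tfrac12 u_{\rho\rho}+\tfrac{d-1}{2\rho}u_\rho=\psi(u)$ for $\rho<s$, with $u(s,s,\theta)=\theta$. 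On the other hand, (\ref{eq-inhomlaplacefunctional}) gives $\partial_s u(r,s,\theta) = \Psi(s,\theta)\cdot\partial_\theta u$ evaluated appropriately — more precisely, differentiating (\ref{eq-inhomlaplacefunctional}) in $s$ at $r=s$ identifies $-\Psi(r,\theta)$ with $\partial_\rho u(\rho,s,\theta)|_{\rho=s=r}$, i.e. the radial derivative of $u$ at the boundary. The strategy is then to relate the two: I would differentiate the boundary relation $u(s,s,\theta)=\theta$ to get $\partial_\rho u + \partial_s u = 0$ at $\rho=s$, combine with $\partial_s u = \Psi\,\partial_\theta u$ from (\ref{eq-inhomlaplacefunctional}), and feed the resulting expression for $\partial_\rho u$ back into the radial $\psi$-harmonic equation; differentiating that equation once more in $s$ and carefully evaluating everything on the diagonal $\rho=s=r$ should convert the second-order ODE in $\rho$ into the stated first-order PDE for $\Psi$ in $(r,\theta)$, with the term $\tfrac12\partial_\theta\Psi^2$ arising from $u_{\rho\rho}$ via the chain rule and the term $\tfrac{d-1}{r}\Psi$ coming directly from the drift part of the radial Laplacian. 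The boundary condition $\Psi(r,\lambda^*)=0$ reflects that $u(r,s,\lambda^*)=\lambda^*$ for all $s\ge r$, since $\lambda^*$ is a fixed point of $\psi$ and hence a stationary boundary value. The main obstacle will be the bookkeeping in this diagonal evaluation: making rigorous the interchange of the $s$-derivative with the integral in (\ref{eq-inhomlaplacefunctional}), justifying the requisite smoothness of $u$ in all three variables (which can be obtained from standard elliptic/parabolic regularity for the semilinear equation, given $\psi(\infty)=\infty$), and correctly tracking the chain-rule terms so that the coefficient of each power of the derivatives of $u$ matches up to yield exactly (\ref{eq-pderelationforpsi}).
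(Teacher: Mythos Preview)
Your approach to (i) and (ii) matches the paper's: the branching Markov property of the exit measures plus radial symmetry give the branching property and the exponential Laplace form, and infinite divisibility plus Silverstein's classification give the L\'evy--Khintchine structure (\ref{eq-Psi}) of $\Psi$.

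For the PDE in (iii), your core idea is also the paper's: feed the relation $\partial_r u=\Psi(r,u)$---obtained directly by differentiating (\ref{eq-inhomlaplacefunctional}) in the \emph{first} variable---and its $r$-derivative into the radial equation $\tfrac12 u_{rr}+\tfrac{d-1}{2r}u_r=\psi(u)$, and read off the PDE for $\Psi$. Your write-up has minor slips (the sign in front of $\Psi$; confusing $r$- versus $s$-derivatives; the extra ``differentiate once more in $s$'' is unnecessary), but these are bookkeeping, not conceptual. The paper concludes by noting that for fixed $r$ the range of $u(r,\cdot,\cdot)$ is all of $(0,\infty)$, rather than your diagonal evaluation, but the two devices are equivalent.

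Where you genuinely diverge is the boundary condition $\Psi(r,\lambda^*)=0$. Your one-line argument---$\psi(\lambda^*)=0$, so the constant $\lambda^*$ solves (\ref{eq-semigroupexitmeasures}) with $f\equiv\lambda^*$, hence by the stated uniqueness $u(r,s,\lambda^*)\equiv\lambda^*$ and therefore $\Psi(r,\lambda^*)=\partial_r u(r,s,\lambda^*)=0$---is correct and much shorter than the paper's route. The paper instead proves that $(e^{-\lambda^* Z_s})_{s\ge r}$ is a martingale (equivalent to $u(r,s,\lambda^*)=\lambda^*$) via a substantial detour through space-time exit measures $X_{D_r^t}$ and a limit $t\to\infty$. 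What the paper's approach buys is independence from the uniqueness assertion for (\ref{eq-semigroupexitmeasures}) in the supercritical regime, together with the martingale $(e^{-\lambda^* Z_s})$ as a byproduct used later; what yours buys is brevity, at the cost of leaning on that uniqueness. Since the paper does assert uniqueness unconditionally, your route is valid within its framework.
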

The authors are not aware of a result in the literature which states that the definition of the time-dependent CSBP in (i) implies the characterisation in (ii). 
It is therefore outlined in the proof of Theorem \ref{theo-branchingmechpde} (ii) in Section \ref{sec-proofoftheo1} how this implication can be derived as a generalisation of the equivalent result for standard CSBPs in Silverstein \cite{silverstein68}. \\
As part of Theorem \ref{theo-branchingmechpde}, we later prove  that the root $\lambda^*$ of $\psi$ is also the root for each  $\Psi(r,\cdot)$, $ r>0$, cf.  Lemma \ref{lem-rootlambdastar}. This will be a key property for the forthcoming analysis of the family of branching mechanism  $(\Psi(r,\cdot),r>0)$.

Let us  now describe how $\Psi$ changes as $r$ increases. We observe the following change in the shape of the branching mechanism, see Figure \ref{fig-changingshape}.
\begin{prop}\label{prop-changeshape} 
(i) For (sub)critical $\psi$, we have, for $0<r\leq s$, 
\bex
\Psi(r,\theta)\leq \Psi(s,\theta) \ \  \text{ for all } \ \theta\geq 0.
\ex
(ii) For supercritical $\psi$, we have, for $0<r\leq s $, 
\bex
&&  \Psi(r,\theta)\geq \Psi(s,\theta) \ \ \text{ for all } \  \theta\leq \lambda^* \\ 
&& \Psi(r,\theta)\leq \Psi(s,\theta) \ \ \text{ for all } \ \theta\geq \lambda^*. 
\ex

\begin{figure}
\centering
\includegraphics[trim=6cm 12cm 3cm 9cm, clip=true, scale=1]{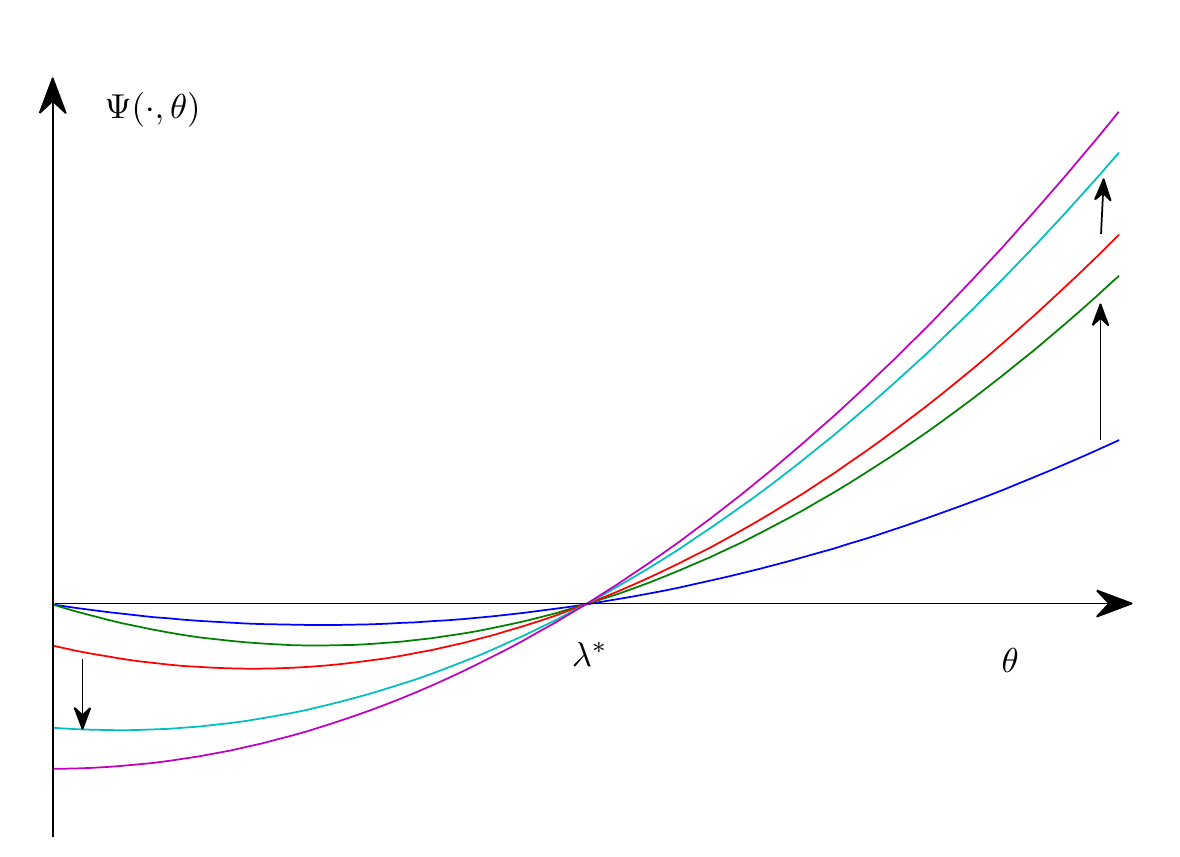}
\caption{Shape of the branching mechanism $\Psi(r,\cdot)$ as $r\to\infty$ in the supercritical case}
 \label{fig-changingshape}
\end{figure}

\end{prop}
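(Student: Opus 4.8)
\emph{Proof strategy.}
The plan is to deduce both parts from the PDE characterisation (\ref{eq-pderelationforpsi}) together with the root property $\Psi(r,\lambda^*)=0$ for all $r>0$ (Lemma \ref{lem-rootlambdastar}). First note that the two cases are really one statement: for $0<r\le s$ the map $r\mapsto\Psi(r,\theta)$ is non-decreasing on $\{\theta>\lambda^*\}$, non-increasing on $\{\theta<\lambda^*\}$, and constant (equal to $0$) at $\theta=\lambda^*$; in the (sub)critical case $\lambda^*=0$, so only the first regime is present. I shall also use two elementary facts coming from $\Psi(r,\cdot)$ being a convex branching mechanism with root $\lambda^*$: the sign of $\Psi(r,\theta)$ equals the sign of $\theta-\lambda^*$, and (by convexity) $\partial_\theta\Psi(r,\theta)\ge\Psi(r,\theta)/(\theta-\lambda^*)>0$ for $\theta>\lambda^*$.

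Fix $c>0$ and put $h(r,\theta):=\Psi(r+c,\theta)-\Psi(r,\theta)$. Evaluating (\ref{eq-pderelationforpsi}) at radius $r+c$ and using $\tfrac{d-1}{r}\ge\tfrac{d-1}{r+c}$, the function $(r,\theta)\mapsto\Psi(r+c,\theta)$ satisfies
\[ \partial_r\Psi(r+c,\theta)+\tfrac12\partial_\theta\Psi^2(r+c,\theta)+\tfrac{d-1}{r}\Psi(r+c,\theta)=2\psi(\theta)+\Big(\tfrac{d-1}{r}-\tfrac{d-1}{r+c}\Big)\Psi(r+c,\theta), \]
so on $\{\theta>\lambda^*\}$, where the last term is non-negative, it is a \emph{supersolution} of the equation solved by $\Psi(r,\cdot)$. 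Subtracting the two equations and using $\tfrac12\partial_\theta\big(\Psi^2(r+c,\cdot)-\Psi^2(r,\cdot)\big)=\tfrac12\partial_\theta\big[(\Psi(r+c,\cdot)+\Psi(r,\cdot))\,h\big]$ gives the linear transport differential inequality
\[ \partial_r h+\tfrac12\partial_\theta\big[(\Psi(r+c,\cdot)+\Psi(r,\cdot))\,h\big]+\tfrac{d-1}{r}h\ \ge\ 0,\qquad \theta>\lambda^*,\ r>0. \]
Multiplying by $r^{d-1}$ and setting $\Xi:=r^{d-1}h$ absorbs the singular curvature coefficient and yields the conservation-form inequality $\partial_r\Xi+\tfrac12\partial_\theta\big[(\Psi(r+c,\cdot)+\Psi(r,\cdot))\Xi\big]\ge0$ on $\{\theta>\lambda^*\}$.

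It remains to supply boundary data and run a maximum principle. On $\{\theta=\lambda^*\}$ one has $\Xi\equiv0$ by the root property. As $r\downarrow0$ one checks that $\Psi(r,\theta)\to0$ for each fixed $\theta$ — differentiating (\ref{eq-inhomlaplacefunctional}) gives $\tfrac{\partial}{\partial r}u(r,s,\theta)=\Psi(r,u(r,s,\theta))$, so via (\ref{eq-semigroupexitmeasures}) the quantity $\Psi(r,\theta)$ is the outward normal derivative at $\partial D_r$ of the solution of $\tfrac12\Delta v=\psi(v)$ in $D_r$ with $v\equiv\theta$ on $\partial D_r$, and this behaves like $\tfrac{2}{d}\psi(\theta)\,r$ as the ball shrinks — so that $\Xi(0^+,\theta)=0$ for $d\ge2$ and $\Xi(0^+,\theta)=\Psi(c,\theta)\ge0$ for $d=1$; in all cases $\Xi$ is non-negative there. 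Since the transport velocity $b:=\tfrac12(\Psi(r+c,\cdot)+\Psi(r,\cdot))$ is strictly positive on $\{\theta>\lambda^*\}$, the characteristics $\dot\theta=b$ never reach $\theta=\lambda^*$ and carry the data from $\{r\to0\}\cup\{\theta=\lambda^*\}$ outwards; integrating the inequality along a backward characteristic with integrating factor $\exp\!\big(\int\partial_\theta b\,\mathrm dr\big)$ (note $\partial_\theta b>0$ on $\{\theta>\lambda^*\}$) and letting its foot tend to $r=0$ gives $\Xi\ge0$, hence $h\ge0$, i.e.\ $\Psi(r+c,\theta)\ge\Psi(r,\theta)$ for every $\theta>\lambda^*$ and $c>0$. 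In the supercritical case the complementary regime $\{\theta<\lambda^*\}$ is symmetric: there $\Psi$ is negative, the perturbation term $\big(\tfrac{d-1}{r}-\tfrac{d-1}{r+c}\big)\Psi(r+c,\cdot)$ is $\le0$, so $\Psi(r+c,\cdot)$ is now a \emph{subsolution} and the same argument with all inequalities reversed yields $h\le0$.

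I expect the main obstacle to be the analysis near $r=0$: rigorously establishing $\Psi(r,\cdot)\to0$ as $r\downarrow0$ (this is where the probabilistic meaning of the time-dependent branching mechanism genuinely enters) and, relatedly, securing enough regularity of $\Psi$ to run the argument — continuity of $\Psi$ and $\partial_\theta\Psi$ up to $r=0$, and control of the characteristics, which run forwards may escape to $\theta=+\infty$ in finite $r$, so that one should in fact work on bounded windows $\{\lambda^*<\theta<\Theta\}$ on which the far end $\theta=\Theta$ is an outflow boundary carrying no data. By contrast, the algebra — deriving the transport inequality and performing the $r^{d-1}$ substitution — is routine, and the maximum-principle step is then standard.
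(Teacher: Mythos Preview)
Your approach is genuinely different from the paper's and, at its core, sound. The paper argues probabilistically: fixing $r<r'$ and $h>0$, it shows $u(r,r+h,\theta)\ge u(r',r'+h,\theta)$ by a geometric ``shifting balls'' argument using translation invariance and the branching Markov property of exit measures (start $X$ from $\delta_{x_{r'}}$; the exit measure from $D_{r'+h}$ is obtained from that of a shifted copy of $D_{r+h}$ by a further exit step whose Laplace transform, in the (sub)critical case, only increases thanks to (\ref{eq-increasinginr})). Then, via $\Psi(r,\theta)=-\partial_s u(r,s,\theta)|_{s=r}$ from (\ref{eq-laplacefunctionalint}), the monotonicity of $\Psi$ in $r$ follows from that of the finite differences. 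Part (ii) is reduced to (i) by the shift $\Psi^*(r,\theta)=\Psi(r,\theta+\lambda^*)$ and a mirror argument. The paper's proof never approaches $r=0$.

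Your route --- a comparison principle for the first-order PDE (\ref{eq-pderelationforpsi}) --- is more analytic and pleasantly structural once set up: the conservation-form inequality $\partial_r\Xi+\partial_\theta(b\,\Xi)\ge0$ is derived correctly, and since backward characteristics (with $\theta$ decreasing because $b>0$) never hit $\{\theta=\lambda^*\}$, \emph{all} information must propagate from $r\downarrow0$. So your self-diagnosis is exactly right: the entire weight of your argument rests on $\Psi(r,\theta)\to0$ as $r\downarrow0$. Your heuristic for this --- that $\Psi(r,\theta)$ is the outward radial derivative at $\partial D_r$ of the solution of $\tfrac12\Delta v=\psi(v)$ in $D_r$ with boundary value $\theta$, giving $\Psi(r,\theta)\sim\tfrac{2}{d}\psi(\theta)\,r$ --- is correct (equivalently: $u(\cdot,s,\theta)$ is radially symmetric and regular at the origin, so $\partial_r u(0,s,\theta)=0$, whence $\Psi(0^+,u(0,s,\theta))=0$, and the range of $u(0,s,\cdot)$ as $s,\theta$ vary covers $(0,\infty)$). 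But upgrading this to a rigorous statement with the uniformity needed to pass to the limit along characteristics is a genuine extra step, and one the paper's argument avoids entirely.

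In short: your strategy can be made to work, but the paper's probabilistic comparison is self-contained at positive radii and sidesteps precisely the boundary analysis at $r=0$ that your PDE method makes unavoidable.
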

This result suggests that there is a limiting branching mechanism ${\Psi}_\infty(\cdot) := \lim_{r\to\infty} \Psi(r,\cdot)$.  Intuitively speaking, in the case where the initial mass is supported on a large ball, the local behaviour of the super-Brownian motion when exiting increasingly larger balls should look like a one-dimensional super-Brownian upon crossing levels. This idea is  supported by the following result. 

\begin{theo}\label{theo-branchingmechlimit} 
For each $\theta\geq 0$, the limit $\lim_{r\uparrow \infty}\Psi(r,\theta)={\Psi}_\infty(\theta)$  is finite and the convergence holds uniformly in $\theta$ on any bounded, closed subset of $\mbb{R}_+$. 
\\
(i) For any $\theta\geq 0$, we have
\be\label{eq-psibartheta}
{\Psi}_\infty(\theta) = 2 \operatorname{sgn}(\psi(\theta)) \  \sqrt{\int_{\lambda^*}^\theta \psi(\lambda) \ {\rm d}\lambda }, 
\e
with $\lambda^*=0$ in the (sub)critical case.\\
(ii) Denote by  $(({Z}^\infty_{s}, {s}\geq 0),{P}^\infty)$ the standard CSBP associated with the limiting branching mechanism ${\Psi}_\infty$, with unit initial mass at time $0$. \\
Then, $({Z}^\infty_{s},{s}\geq 0 )$ is the total mass of the process of branching Markov exit measures of a one-dimensional super-Brownian  motion with unit initial mass at time zero as it first exits the family of intervals $((-\infty, {s}), {s}\geq 0)$. \\
Further,  for any $s>0$,  $\theta\geq 0$,
\be\label{eq-convergencetozinfty}
\lim_{r\to\infty} E_r[e^{-\theta Z_{r+s}}] = {E}^\infty[e^{-\theta {Z}^\infty_{s}}]. 
\e
\end{theo}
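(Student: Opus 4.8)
The plan is to exploit the PDE \eqref{eq-pderelationforpsi} and the monotonicity from Proposition \ref{prop-changeshape}. First I would argue that $\Psi_\infty(\theta):=\lim_{r\uparrow\infty}\Psi(r,\theta)$ exists as a (possibly infinite) limit: in the (sub)critical case $\Psi(r,\theta)$ is nondecreasing in $r$ for each fixed $\theta$, and in the supercritical case it is monotone on each of the intervals $[0,\lambda^*]$ and $[\lambda^*,\infty)$, so in all cases the pointwise limit is well defined. To show the limit is \emph{finite}, I would look for an $r$-independent upper bound. The natural candidate is the function $G(\theta):=2\,\operatorname{sgn}(\psi(\theta))\sqrt{\int_{\lambda^*}^\theta\psi(\lambda)\,\mathrm d\lambda}$, which one checks is precisely the stationary solution of \eqref{eq-pderelationforpsi} with the dissipative term $\tfrac{d-1}{r}\Psi$ dropped, i.e.\ it solves $\tfrac12 (G^2)'(\theta)=2\psi(\theta)$, $G(\lambda^*)=0$. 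A comparison argument for the PDE \eqref{eq-pderelationforpsi} — using that $\tfrac{d-1}{r}\Psi(r,\theta)\geq 0$ for $\theta\geq\lambda^*$ and $\leq 0$ for $\theta\leq\lambda^*$, together with the sign of $\Psi$ relative to $\lambda^*$ coming from Lemma \ref{lem-rootlambdastar} — should pin $\Psi(r,\cdot)$ between $G$ and the relevant monotone bound, giving both finiteness of the limit and the identification $\Psi_\infty=G$, which is \eqref{eq-psibartheta}.

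For the identification of $\Psi_\infty$ I would then make the comparison rigorous by the following route. Since the limit exists and is finite, and $\Psi(r,\cdot)$ is convex with fixed root $\lambda^*$, the family is locally equi-Lipschitz and the convergence is locally uniform (a standard consequence of pointwise convergence of convex functions); in particular $\Psi_\infty$ is itself convex with $\Psi_\infty(\lambda^*)=0$. Passing to the limit $r\to\infty$ in an integrated (mild) form of \eqref{eq-pderelationforpsi} — integrating the PDE against a test function in $\theta$ and over $r\in[r_0,r_1]$, then using that $\Psi(r,\theta)\to\Psi_\infty(\theta)$ boundedly on compacts and that the coefficient $\tfrac{d-1}{r}$ of the zeroth-order term vanishes, while $\Psi(r_1,\theta)-\Psi(r_0,\theta)\to 0$ — leaves the stationary relation $\tfrac12(\Psi_\infty^2)'(\theta)=2\psi(\theta)$ in the distributional sense on $(0,\infty)$. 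Integrating from $\lambda^*$ and using $\Psi_\infty(\lambda^*)=0$ together with the sign of $\Psi_\infty$ (inherited from that of $\Psi(r,\cdot)$, hence of $\psi$, via Lemma \ref{lem-rootlambdastar}) yields \eqref{eq-psibartheta}.

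For part (ii), I would first verify directly that $G=\Psi_\infty$ is a genuine branching mechanism of the Lévy–Khintchine form in \eqref{eq-Psi} — it is convex, $G(0)\geq 0$ is automatic since $G(0)=2\operatorname{sgn}(\psi(0))\sqrt{\int_{\lambda^*}^0\psi}$, and one reads off that it is the branching mechanism of the total mass of a \emph{one-dimensional} super-Brownian motion exiting the half-lines $(-\infty,s)$: indeed, repeating the derivation of Theorem \ref{theo-branchingmechpde}(iii) in dimension one, the dissipative term $\tfrac{d-1}{r}\Psi$ is absent (translation invariance on $\mathbb R$ replaces the radial Laplacian), so the exit-measure branching mechanism there is exactly the solution of $\tfrac12(\Psi^2)'=2\psi$, $\Psi(\lambda^*)=0$, which is time-homogeneous and equals $G$. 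This gives the stated representation of $({Z}^\infty_s,s\ge0)$. Finally, \eqref{eq-convergencetozinfty} follows by writing $E_r[e^{-\theta Z_{r+s}}]=e^{-u(r,r+s,\theta)}$ and showing $u(r,r+s,\theta)\to u_\infty(s,\theta)$, where $u_\infty$ solves $u_\infty(s,\theta)=\theta-\int_0^s\Psi_\infty(u_\infty(z,\theta))\,\mathrm dz$: substituting $z\mapsto z-r$ in \eqref{eq-inhomlaplacefunctional}, the integrand $\Psi(z,u(z,r+s,\theta))$ converges to $\Psi_\infty(u_\infty(\cdot,\theta))$ by the locally uniform convergence $\Psi(r,\cdot)\to\Psi_\infty$ established above plus continuous dependence of the ODE \eqref{eq-inhomlaplacefunctional} on its coefficients, and a Gronwall estimate closes the argument.

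\medskip

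I expect the main obstacle to be making the PDE comparison fully rigorous: \eqref{eq-pderelationforpsi} is a first-order quasilinear PDE for $\Psi$ and one must be careful about in what sense $\Psi(r,\cdot)$ solves it (classical on $(0,\infty)$ in $\theta$, or only mild), about the boundary behaviour as $\theta\downarrow 0$ and $\theta\uparrow\infty$, and about justifying the interchange of limits. The cleanest path is probably to avoid a direct comparison principle and instead combine (a) the monotone bound from Proposition \ref{prop-changeshape}, (b) an a priori upper bound by $G$ obtained from an energy/Riccati-type estimate on $\Psi^2$, and (c) the weak-limit passage in the integrated PDE sketched above; the convexity-plus-fixed-root structure is what makes (b) and (c) robust.
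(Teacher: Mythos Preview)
Your proposal is essentially correct and follows the same architecture as the paper --- monotonicity for existence, finiteness, then passage to the limit in the PDE, then the one-dimensional identification and a Gronwall argument for the Laplace functionals --- but your \emph{finiteness} step is genuinely different from what the paper does. You propose to bound $\Psi(r,\cdot)$ a priori by the explicit target $G$: from \eqref{eq-pderelationforpsi} together with $\tfrac{\partial}{\partial r}\Psi\ge 0$ and $\tfrac{d-1}{r}\Psi\ge 0$ on $[\lambda^*,\infty)$ one gets $\tfrac12\partial_\theta\Psi^2\le 2\psi$, and integrating from $\lambda^*$ yields $\Psi^2(r,\theta)\le G^2(\theta)$, with the analogous bound on $[0,\lambda^*]$ by reversing signs. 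This is perfectly valid and in fact cleaner than the paper's route, which does \emph{not} use $G$ as a comparison function; instead the paper argues by contradiction: if $\Psi(r,\theta_0)\uparrow\infty$ then the PDE forces $\partial_\theta\Psi(r_0,\theta_0)<1/K$ for $K$ arbitrarily large, whence convexity gives $\Psi(r_0,\theta_0)\le\theta_0/K$, a contradiction. Your approach buys you the upper bound and the identification almost simultaneously; the paper's approach is more self-contained (no need to guess the limit) and also delivers the auxiliary fact $\limsup_r\partial_\theta\Psi(r,\theta)<\infty$, which it then feeds directly into a pointwise, term-by-term passage to the limit in \eqref{eq-pderelationforpsi} via Rockafellar's theorems on convergence of convex functions --- slightly more concrete than your integrated weak-form passage, but morally the same.

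One small correction: your aside that ``$G(0)\ge 0$ is automatic'' is false in the supercritical case; there $G(0)=-2\sqrt{\int_0^{\lambda^*}|\psi|}<0$, which is exactly the explosion coefficient $-q$ in the form \eqref{eq-Psi} (the paper records this in Proposition~\ref{prop-pdeforpsiinfinity}(ii) and the remark following the theorem). This does not obstruct $\Psi_\infty$ from being a branching mechanism, but it is the opposite of what you wrote.
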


Let us remark that, in the supercritical case, the limiting branching mechanism ${\Psi}_\infty$ is critical and possesses an explosion coefficient, that is ${\Psi}_{\infty}'(0+) = 0$ and ${\Psi}_{\infty}(0)<0$.  Thanks to the uniform continuity in $\theta$, this implies  that $\Psi(t,0)<0$ for all sufficiently large $t$. \\
The limiting process $Z^\infty$ in Theorem \ref{theo-branchingmechlimit} has already been studied in Theorem 3.1 in Kyprianou et al. \cite{kyprianouetalsbmtw12}. Note that therein the underlying Brownian motion has a positive drift  which is chosen such that the resulting branching mechanism is conservative. The characterisation can easily be adapted to the driftless case as in Theorem \ref{theo-branchingmechlimit} (ii).  Kaj and Salminen \cite{kaj1993note, kaj1993first} studied the analogous process in the setting of branching particle diffusions, that is the process of the number of particles of a one-dimensional branching Brownian motion stopped upon exiting the interval  $((-\infty, {s}), {s}\geq 0)$. They discover in the supercritical case \cite{kaj1993note} that the resulting offspring distribution is degenerate, meaning that 
\be\label{eq-degenerateoffspring}
\sum_{i\geq 0} p_i < 1,
\e
where $p_i$ is the probability of having $i$ offspring, $i\geq 0$. In particular, the probability of a birth event with an infinite number of offspring is strictly positive. In this view, (\ref{eq-degenerateoffspring}) is the analogue of $\Psi_\infty(0)<0$. 

In Sheu \cite{sheu94, sheu97}, asymptotics of the process $Z$ are studied in order to obtain a compact support criterion for the super-Brownian motion $X$. It is found that the event of extinction of $Z$, i.e. $\{\exists s>0 : Z_s=0 \}$, and the event $\{X \text{ has compact support}\}$ 
agree $\mbf{P}_\mu$-a.s., c.f.  \cite{sheu97}, Theorem 4.1. \\
The following result on the asymptotic behaviour of $Z$ is given by Sheu \citep{sheu94}. 

\begin{theo*}[Sheu \cite{sheu94} Theorem 1.1, Theorem 1.2, Cor. 1.1] Let $\mu\in\mcal{M}_F(\mbb{R}^d)$. The event $\{\exists s>0 :Z_s=0 \}$ agrees $\mbf{P}_\mu$-a.s. with the event $\{\lim_{s\to\infty}Z_s = 0\}$  
if $\psi$ satisfies
\be\label{eq-sheuscondition2}
\int^\infty \frac{1}{ \sqrt{\int_{\lambda^*}^\lambda \psi(\theta) \ {\rm d}\theta} } \ {\rm d}\lambda < \infty.
\e
Otherwise,  $\{ \exists s>0 : Z_s=0 \}$  has probability $0$. 
\end{theo*}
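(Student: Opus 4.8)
The plan is to reduce the event $\{Z_s=0\}$ to the blow-up (``large'') solution of the stationary equation attached to $\psi$, to recognise Sheu's condition as the Keller--Osserman criterion for that solution to be finite, and then to let the radius grow. \emph{Step 1 (a Dirichlet problem with infinite boundary data).} Taking $f\equiv n$ in (\ref{eq-strongmarkov})--(\ref{eq-semigroupexitmeasures}) and letting $n\uparrow\infty$, one gets, for $s$ large enough that $\mathrm{supp}\,\mu\subset D_s$,
\[
\mbf P_\mu(Z_s=0)=\mbf P_\mu(X_{D_s}=0)=e^{-\la \bar v_s,\mu\ra},\qquad \bar v_s:=\lim_{n\uparrow\infty}v_n(\cdot,s),
\]
where $\bar v_s$ is the maximal non-negative solution of $\tfrac12\Delta \bar v=\psi(\bar v)$ in $D_s$ blowing up on $\partial D_s$ (with $e^{-\infty}=0$ when $\bar v_s\equiv\infty$). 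By the classical Keller--Osserman theory such a finite large solution exists precisely when $\int^{\infty}\bigl(\int_{\lambda^*}^{\lambda}\psi(\theta)\,{\rm d}\theta\bigr)^{-1/2}{\rm d}\lambda<\infty$, i.e.\ exactly under Sheu's condition (\ref{eq-sheuscondition2}); otherwise $\bar v_s\equiv\infty$. The point worth stressing is that, by (\ref{eq-psibartheta}), this is Grey's (1974) integral test $\int^{\infty}{\rm d}\lambda/\Psi_\infty(\lambda)<\infty$ for the limiting branching mechanism $\Psi_\infty$ of Theorem \ref{theo-branchingmechlimit}, since $\Psi_\infty(\lambda)=2(\int_{\lambda^*}^{\lambda}\psi)^{1/2}$ for $\lambda\ge\lambda^*$.

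\emph{Step 2 (monotonicity in $s$; the case that (\ref{eq-sheuscondition2}) fails).} If $X$ never exits $D_s$ then it never exits $D_{s'}$ for $s'\ge s$, so $\{Z_s=0\}$ is non-decreasing in $s$ (equivalently $0$ is absorbing for the time-inhomogeneous CSBP $Z$ of Theorem \ref{theo-branchingmechpde}); hence $\{\exists s:Z_s=0\}=\bigcup_s\{Z_s=0\}$ and $\mbf P_\mu(\exists s:Z_s=0)=\lim_{s\to\infty}e^{-\la \bar v_s,\mu\ra}$. When (\ref{eq-sheuscondition2}) fails, $\bar v_s\equiv\infty$ for all $s$, so $\mbf P_\mu(Z_s=0)=0$ for every $s$ and $\mbf P_\mu(\exists s:Z_s=0)=0$, which is the second assertion. (Purely in terms of $Z$: by Proposition \ref{prop-changeshape} and (\ref{eq-psibartheta}) one has $\Psi(z,\theta)\le\Psi_\infty(\theta)$ for $\theta\ge\lambda^*$, so the backward equation (\ref{eq-inhomlaplacefunctional}) for $u(\cdot,s,\infty)$ is dominated from below by the autonomous equation governing the extinction exponent of the $\Psi_\infty$-CSBP $Z^\infty$, which by Grey's test never comes down from infinity once (\ref{eq-sheuscondition2}) fails.)

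\emph{Step 3 (the case that (\ref{eq-sheuscondition2}) holds, and the obstacle).} Now $\bar v_s$ is finite and, as $s\uparrow\infty$, the $\bar v_s$ decrease (the $D_s$ exhaust $\mbb R^d$) to a non-negative entire solution of $\tfrac12\Delta v=\psi(v)$; since the constant $\lambda^*$ is a solution with finite boundary data, $\bar v_s\ge\lambda^*$, and a Liouville-type argument ($v-\lambda^*\ge0$ is bounded and subharmonic because $\psi$ is convex with $\psi(\lambda^*)=0$) forces the limit to be $\lambda^*$. Hence $\la \bar v_s,\mu\ra\downarrow\lambda^*\|\mu\|$ and $\mbf P_\mu(\exists s:Z_s=0)=e^{-\lambda^*\|\mu\|}$. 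On the other hand $\{\exists s:Z_s=0\}\subset\{\lim_{s\to\infty}Z_s=0\}$ ($0$ absorbing), and $\mbf P_\mu(\lim_{s\to\infty}Z_s=0)=e^{-\lambda^*\|\mu\|}$ too: $v\equiv\lambda^*$ solves (\ref{eq-semigroupexitmeasures}) with $f\equiv\lambda^*$ (as $\psi(\lambda^*)=0$, equivalently $\Psi(\cdot,\lambda^*)=0$), so $(e^{-\lambda^* Z_s})_s$ is a bounded $\mbf P_\mu$-martingale, giving a.s.\ convergence of $Z_s$, while (\ref{eq-inhomlaplacefunctional}), Proposition \ref{prop-changeshape} and Lemma \ref{lem-rootlambdastar} yield $v_\theta(\cdot,s)\to\lambda^*$ pointwise for each $\theta\in(0,\infty)$, so $\mbf E_\mu[e^{-\theta Z_s}]\to e^{-\lambda^*\|\mu\|}$ and the a.s.\ limit lies in $\{0,\infty\}$ with $\mbf P_\mu(\lim_s Z_s=0)=e^{-\lambda^*\|\mu\|}$; comparing the two probabilities with the inclusion shows the two events coincide $\mbf P_\mu$-a.s. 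The load-bearing step is the identification of $\lim_s\bar v_s$ with $\lambda^*$ (ruling out a larger entire solution): this is where the global geometry enters and is essentially the heart of Sheu's argument, whereas the CSBP comparison of Step 2 only gives $\mbf P_\mu(\exists s:Z_s=0)\le P^\infty(\exists t:Z^\infty_t=0)$ and cannot supply the matching lower bound, since $\Psi(z,\cdot)$ is not bounded below by $\Psi_\infty$.
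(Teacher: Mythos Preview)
The paper does not contain a proof of this theorem: it is quoted verbatim from Sheu \cite{sheu94} and the only new contribution the paper makes about it is Corollary \ref{cor-sheugrey}, which identifies (\ref{eq-sheuscondition2}) with Grey's condition for the limiting mechanism $\Psi_\infty$. There is therefore nothing in the paper to compare your argument against; what you have written is a reconstruction of Sheu's own route via the Keller--Osserman large solution $\bar v_s$ of $\tfrac12\Delta v=\psi(v)$ on $D_s$, together with the observation (which is precisely the content of Corollary \ref{cor-sheugrey}) that the Keller--Osserman integral is $\int^\infty {\rm d}\lambda/\Psi_\infty(\lambda)$.

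Your outline is the right one and Steps 1--2 are clean. Two points in Step 3, however, are not justified as stated. First, the Liouville sentence (``$v-\lambda^*\ge0$ bounded and subharmonic, hence constant'') fails for $d\ge3$: bounded non-negative subharmonic functions on $\mbb R^d$ need not be constant in the transient case. One has to use the radial symmetry of each $\bar v_s$ (hence of the limit $v$) and analyse the resulting ODE, or compare $v$ with translates $\bar v_{s'}(\cdot-x_0)$ on shrunken balls, to force $v\equiv\lambda^*$; you are right that this is the load-bearing step in Sheu's paper, but the one-line Liouville claim does not carry it. Second, the assertion that ``(\ref{eq-inhomlaplacefunctional}), Proposition \ref{prop-changeshape} and Lemma \ref{lem-rootlambdastar} yield $v_\theta(\cdot,s)\to\lambda^*$ for each $\theta\in(0,\infty)$'' does not follow from those results: Proposition \ref{prop-changeshape} gives monotonicity of $\Psi(r,\cdot)$ in $r$, not the large-$s$ behaviour of $u(r,s,\theta)$ for fixed $r$. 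One needs a separate monotonicity-in-$s$ argument (e.g.\ from (\ref{eq-semigroupbessel}) as in the proof of Lemma \ref{lem-rootlambdastar}) together with the identification of the limit as a constant solution, which again is part of Sheu's analysis rather than something supplied by the present paper.
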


In short, the event of extinction of $Z$ 
agrees with the event of extinguishing of $Z$, denoted by $\mcal{E}(Z):= \{\lim_{{s}\to\infty} Z_{s} = 0\}$, if and only if (\ref{eq-sheuscondition2}) holds, and it has zero probability otherwise. We have stated the theorem slightly differently from its original version in which, in the supercritical case, condition (\ref{eq-sheuscondition2})  reads  $ \int_s^\infty \frac{1}{\sqrt{{{\int_0^\lambda \phi(\theta) \ {\rm d}\theta}}}} \ {\rm d}\lambda < \infty,$ for $\phi(s):=\psi(s)-\alpha s$.
The equivalence of these two conditions was already  pointed out  in \cite{kyprianouetalsbmtw12}. \\
The unusual condition (\ref{eq-sheuscondition2}) corresponds to Grey's condition in \cite{greycsbps74} for extinction vs. extinguishing in the following sense. Recall that Grey's condition says that, for a standard CSBP with branching mechanism $F$, the event of extinction agrees with the event of becoming extinguished if and only if 
$\int^\infty F(\theta)^{-1} \ \ {\rm d}\theta < \infty$,
 and has probability zero otherwise. The following interpretation of (\ref{eq-sheuscondition2}) is an immediate consequence of Theorem \ref{theo-branchingmechlimit} (i). 

\begin{cor}\label{cor-sheugrey}
Sheu's compact support condition (\ref{eq-sheuscondition2}) is   Grey's condition for the limiting standard CSBP $Z^\infty$ with branching mechanism ${\Psi}_\infty$ in (\ref{eq-psibartheta}).
\end{cor}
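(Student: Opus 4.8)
The plan is to observe that the corollary is an essentially immediate consequence of the explicit formula (\ref{eq-psibartheta}) for $\Psi_\infty$, so that only a one-line computation is needed. First I would recall that, by Theorem \ref{theo-branchingmechlimit} (ii), $Z^\infty$ is a genuine standard CSBP whose branching mechanism is $\Psi_\infty$, so that Grey's dichotomy from \cite{greycsbps74} applies to it verbatim: the event of extinction of $Z^\infty$ coincides with the event of its extinguishing precisely when $\int^\infty \Psi_\infty(\theta)^{-1}\,{\rm d}\theta<\infty$, and the extinction event is null otherwise. The task therefore reduces to identifying this integral condition with Sheu's condition (\ref{eq-sheuscondition2}).

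Next I would pin down the signum prefactor in (\ref{eq-psibartheta}) on the range that matters. Since $\psi$ is convex with $\psi(\infty)=\infty$ and $\lambda^*$ is its largest root, we have $\psi(\theta)>0$ for every $\theta>\lambda^*$, whence $\operatorname{sgn}(\psi(\theta))=1$ and
\[
\Psi_\infty(\theta) = 2\sqrt{\int_{\lambda^*}^\theta \psi(\lambda)\,{\rm d}\lambda}, \qquad \theta>\lambda^*.
\]
In particular $\Psi_\infty$ is continuous and strictly positive on $(\lambda^*,\infty)$, so $\int^\infty \Psi_\infty(\theta)^{-1}\,{\rm d}\theta$ is genuinely a condition at infinity, insensitive to the choice of a finite lower endpoint above $\lambda^*$. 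Substituting the displayed expression and discarding the harmless constant $\tfrac12$, Grey's condition for $Z^\infty$ becomes exactly
\[
\int^\infty \frac{{\rm d}\lambda}{\sqrt{\int_{\lambda^*}^\lambda \psi(\theta)\,{\rm d}\theta}} < \infty,
\]
that is, (\ref{eq-sheuscondition2}); and in the (sub)critical case this specialises correctly since then $\lambda^*=0$.

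There is really no serious obstacle here: the analytic content of the corollary has already been carried over into $\Psi_\infty$ by Theorem \ref{theo-branchingmechlimit} (i)--(ii), and what remains is algebraic. The only point deserving a line of justification is that the signum factor in (\ref{eq-psibartheta}) equals $+1$ on $(\lambda^*,\infty)$ — which is precisely where the Grey integral lives — and this follows from convexity of $\psi$ together with $\psi(\infty)=\infty$; the constant $2$ in (\ref{eq-psibartheta}) and the resulting factor $\tfrac12$ are irrelevant to a convergence question.
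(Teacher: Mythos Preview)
Your argument is correct and matches the paper's treatment: the paper states the corollary as an immediate consequence of Theorem \ref{theo-branchingmechlimit} (i) and gives no separate proof, and your write-up simply makes explicit the one-line substitution of (\ref{eq-psibartheta}) into Grey's integral that the word ``immediate'' stands for.
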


Sheu's compact support condition (\ref{eq-sheuscondition2}) plays an important role when studying the radial speed of the support of supercritical Super-Brownian motion. In the one-dimensional case,  assuming (\ref{eq-sheuscondition2}), Kyprianou et. al \cite{kyprianouetalsbmtw12}, Corollary 3.2, show that 
\be\label{eq-stronglawradialspeed}
\lim_{t\to\infty} \frac{\mcal{R}_t}{ t} = \sqrt{-2 \psi'(0+)}, \ \ \ \mbf{P}_{\mu}-a.s, \ \mu\in\mcal{M}_F(\mbb{R}),
\e
where $\mcal{R}_t:=\sup\{r>0: X_t(r,\infty)>0\}$ is the right-most point of the support of $X_t$. A key step in the proof is to study  the  total mass of the process   of branching exit measures of a one-dimensional super-Brownian motion with drift $c := -\sqrt{-2 \psi'(0+)}$ upon exiting the increasing sequence of intervals $((-\infty,s),s\geq 0)$, which we denote here by $Z^c = (Z^{c}_s, s\geq 0)$. It is proved in Theorem 3.1 in \cite{kyprianouetalsbmtw12} that $Z^{c}$ is a subcritical standard CSBP. Now condition (\ref{eq-sheuscondition2}) comes in. Corollary \ref{cor-sheugrey} interprets (\ref{eq-sheuscondition2}) as  Grey's condition for the standard CSBP $Z^\infty$. The CSBPs $Z^\infty$ and $Z^c$ only differ in that the underlying Brownian motion of the latter  has drift $c$ and it is not difficult to convince ourselves that the drift term is irrelevant when studying the extinction vs. extinguishing problem, see (29) in \cite{kyprianouetalsbmtw12} for a rigorous argument. Therefore condition (\ref{eq-sheuscondition2}) is also equivalent to Grey's condition for the subcritical CSBP $Z^c$ and hence ensures that   $Z^c$ becomes extinct $\mbf{P}_\mu$-a.s. This now implies   that the right-most point of the support cannot travel at a speed faster than $\sqrt{-2 \psi'(0+)}$. In order to make this last conclusion, extinguishing of $Z^c$ is clearly not sufficient and 
it remains an open questions whether a strong law for $(\mcal{R}_t,t\geq 0)$ can exist when (\ref{eq-sheuscondition2}) fails. \\
In the $d$-dimensional case, $d\geq 1$, and with a quadratic branching mechanism of the form $\psi(\lambda)=- \alpha \lambda + \beta \lambda^2$, for $\alpha, \beta  \geq 0$, Kyprianou \cite{kyprianou2005asymptotic} shows that (\ref{eq-stronglawradialspeed}) holds, where $\mcal{R}_t$ is now replaced by $\tilde{\mcal{R}}_t:=\sup\{r>0: X_t(\mbb{R}^d\backslash D_r)>0\}$, the radius of the support of $X_t$. It can be checked that condition (\ref{eq-sheuscondition2})  is satisfied for this choice of $\psi$. It is possible to adapt the higher-dimensional result in \cite{kyprianou2005asymptotic} to hold for general branching mechanisms provided (\ref{eq-sheuscondition2}) holds. \\

The remainder of the paper is organised as follows. In Section \ref{sec-branchingmechpde} we prove Theorem \ref{theo-branchingmechpde} which is followed by the proof Proposition \ref{prop-changeshape} and Theorem \ref{theo-branchingmechlimit} in Section \ref{sec-branchingmechlim}.

%%%%%%%%%%%%%%%%%%%%%%%%%%%%%%%%%%%%%%%%%%%%%%%%%%%%%%%%%%%%%%%%%%%%%
%%%%%%%%%%%%%%%%%%%%%%%%%%%%%%%%%%%%%%%%%%%%%%%%%%%%%%%%%%%%%%%%%%%%%
%%%%%%%%%%%%%%%%%%%%%%%%%%%%%%%%%%%%%%%%%%%%%%%%%%%%%%%%%%%%%%%%%%%%
\section{Characterising the process $Z$ - Proof of Theorem \ref{theo-branchingmechpde}}\label{sec-branchingmechpde}

\subsection{Proof of Theorem \ref{theo-branchingmechpde} (i) and (ii)}\label{sec-proofoftheo1} 

\begin{proof}[Proof of Theorem \ref{theo-branchingmechpde} (i)]
 Take a look at equation (\ref{eq-strongmarkov}) which characterises the sequence of branching exit measures $(X_{D_s}, s\geq r)$. For any measure $\mu\in\mcal{M}_F(\partial D_r)$ and $||\mu||=a$, we can  write
\bex
E_{{a,r}}[e^{-\theta Z_{s}}]&=& \mbf{E}_{\mu}[e^{-\theta ||X_{D_{s}}||}]
= e^{-\la v_\theta (\cdot,{s}),\mu \ra}  
=e^{- v_\theta(x,{s}) a}, 
\ex
for any $x\in\p D_r$, by radial symmetry. The branching property of $Z$ now follows easily from the branching property of $(X_{D_{s}},{s}>r)$ in (\ref{eq-strongmarkov}) since, for $a,a'>0$, $0<r\leq {s}$, 
\bex
E_{(a+a') , {r}} [e^{-\theta Z_{s}}] 
&=& \mbf{E}_{\mu+\mu'}[e^{-\theta ||X_{D_{s}}||}] \\
&=& e^{- v_\theta(x,{s}) (a + a')} \\
&=& \mbf{E}_{\mu}[e^{-\theta ||X_{D_{s}}||}] \mbf{E}_{\mu'}[e^{-\theta ||X_{D_{s}}||}] 
= E_{a , {r}} [e^{-\theta Z_{s}}]E_{a' , {r}} [e^{-\theta Z_{s}}], 
\ex
for measures $\mu$, $\mu' \in \mcal{M}_F(\p D_r)$ with $||\mu||=a$, $||\mu'||=a'$.
The  Markov property is also an immediate consequence of (\ref{eq-strongmarkov}).
\end{proof}
\begin{proof}[Proof of Theorem \ref{theo-branchingmechpde} (ii)] 
First note that, by radial symmetry as seen in the proof of Theorem \ref{theo-branchingmechpde} (i), (\ref{eq-laplacedef}) holds with $u({r},{s},\theta)=v_\theta (x,{s})$ for $x\in\p D_r$ where $r=||x||$. It remains to show that  (\ref{eq-inhomlaplacefunctional}) and (\ref{eq-Psi}) are satisfied. \\
 For any $0<r\leq {z}\leq {s}$, $\theta\geq 0$,
\bex
E_r[e^{-\theta Z_{s}}] = E_{r} [ E_{Z_{z}, {z}}[e^{-\theta Z_{s}} ]] = E_{r} [e^{- u({z},{s},\theta) Z_{z}}] = e^{-u(r,{z},u({z},{s},\theta))},
\ex
which shows that the Laplace functional satisfies the composition property 
\be\label{eq-compositionprop}
u({r},{s},\theta) = u(r,{z}, u({z},{s},\theta)) \ \text{ for } 0<r\leq {z} \leq {s}, \  \theta\geq 0.
\e
The branching property of $Z$ implies that, for any fixed $0<r\leq{s}$,  the law of $(Z_{s},P_r)$ is an infinitely divisible distribution on $[0,\infty]$. It follows  from the L\'evy-Khintchin formula that, for fixed $r$ and $s$, $u({r},{s},\theta)$ is a non-negative, completely concave function as considered in Section 4 in Silverstein \cite{silverstein68}. The process $Z$ thus has the properties of the time-dependent version of the CSBP considered in Definition 4 in \cite{silverstein68}. 
We can then adapt the proof of Theorem 4 in \cite{silverstein68} to show that there exists a branching mechanism $\Psi$ of the form (\ref{eq-Psi}) such that
\bex
\frac{\p}{\p r} u({r},{s},\theta)\big|_{r=s}
= \Psi(s,\theta), \ \text{ for } {s}>0, \  \theta\geq 0.
\ex
With the composition property (\ref{eq-compositionprop}), we then get
\bex
\frac{\p}{\p r} u({r},{s},\theta) 
 &=&  \Psi(r,u({r},{s},\theta)), \ \text{ for } 0<r\leq {s}, \  \theta\geq 0.
\ex 
Indeed  it was already discussed at the end of Section 4 in \cite{silverstein68} that it is possible to allow time-dependence in Theorem 4 in \cite{silverstein68}. \\
Together with the initial condition $u(r,r,\theta) = \theta$, we obtain  equation (\ref{eq-inhomlaplacefunctional}).
\end{proof} 

From (\ref{eq-inhomlaplacefunctional}), we get an alternative characterisation of the relation between the Laplace functional $u$ and the branching mechanism $\Psi$ as
\be\label{eq-laplacefunctionalint}
\frac{\partial}{\partial {s}} u({r},{s},\theta) &=& - \Psi({s},\theta) \frac{\partial}{\partial \theta} u({r},{s},\theta) \\
\frac{\partial}{\partial r} u({r},{s},\theta) &=& \Psi(r,u({r},{s},\theta)) \label{eq-laplacefunctionalinr} \\
u(r,r,\theta)&=&\theta, \nonumber
\e
for any ${s}> r>0$ and  $\theta\geq 0$. To see where equation (\ref{eq-laplacefunctionalint}) comes from, compare the derivatives of (\ref{eq-inhomlaplacefunctional}) in ${s}$ and $\theta$, that is
\bex
\frac{\partial}{\partial {s}} u({r},{s},\theta) &=& - \Psi({s},\theta) - \int_{r}^{s} \frac{\p}{\p u}\Psi({z},u({z},{s},\theta)) \frac{\p}{\p {s}} u({z},{s},\theta) \ {\rm d}{z} \\
\frac{\partial}{\partial \theta} u({r},{s},\theta) &=& 1 - \int_{r}^{s} \frac{\p}{\p u}\Psi({z},u({z},{s},\theta)) \frac{\p}{\p \theta} u({z},{s},\theta) \ {\rm d}{z},
\ex
where ${\p}\Psi(\cdot,\cdot)/{\p u} $ denotes the derivative in the second component of $\Psi$. We see that $\frac{\partial}{\partial {s}} u({r},{s},\theta)$ and $- \Psi({s},\theta) \frac{\partial}{\partial \theta} u({r},{s},\theta)$ are solutions to the same integral equation. With an application of Gronwall's inequality it can be shown that this integral equation has a unique solution. 

\subsection{Proof of Theorem \ref{theo-branchingmechpde} (iii)} 
We have already seen in the previous section that, for any measure $\mu\in\mcal{M}_F(\partial D_r)$ with  $||\mu||=a$, we can  write
\bex
E_{{a,r}}[e^{-\theta Z_{s}}]&=& \mbf{E}_{\mu}[e^{-\theta ||X_{D_{s}}||}]
= e^{-\la v_\theta (\cdot,{s}),\mu \ra}  
=e^{- v_\theta(x,{s}) a},
\ex
for any $x\in\p D_r$, by radial symmetry. In particular,  we saw that $u({r},{s},\theta)=v_\theta (x,{s})$ for any $x\in\p D_r$.  From the semi-group equation for $v$ in (\ref{eq-semigroupexitmeasures}),
we thus get a semi-group representation of $u$, alternative to the representation in (\ref{eq-inhomlaplacefunctional}), as the unique non-negative solution to 
\be\label{eq-semigroupbessel}
u({r},{s},\theta) = \theta - \mbb{E}_r^{\text{R}} \Big[ \int_0^{\tau_{s}} \psi(u (R_{z},{s},\theta)) \ {\rm d}{z} \Big],
\e
where $(R,\mbb{P}_r^\text{R})$ is a $d$-dimensional Bessel process and $\tau_{s}:=\inf\{{z}>0: R_{z}>{s}\}$ its first passage time above level ${s}$.\\
Equation (\ref{eq-semigroupbessel}) tells us that the process  $Z$ can  be viewed as   the total mass process of the branching exit measures  of a $d$-dimensional super-Bessel process with branching mechanism $\psi$ as it first exits the  intervals $(0,{s})$, $s\geq r$.\\ 
Equivalently to the characterisation of $u({r},{s},\theta)$ as the unique non-negative solution to the integral equation (\ref{eq-semigroupbessel}), we can characterise it as the unique non-negative solution to the differential equation
\be\label{eq-pdebessel1}
\frac{1}{2}\frac{\partial^2}{\partial r^2} u({r},{s},\theta) + \frac{d-1 }{2 r} \frac{\partial}{\partial r} u({r},{s},\theta) &=&  \psi(u({r},{s},\theta)), \ \ \ 0<{r}< {s}, \theta\geq 0,\nonumber \\
u(r,r,\theta)&=&\theta. 
\e
We will show this equivalence in  \ref{app-ode}. In the following section, we will use the  differential equation (\ref{eq-pdebessel1}) to prove the PDE characterisation of the branching mechanism $\Psi$ in  Theorem \ref{theo-branchingmechpde} (iii). \\
We prove  Theorem \ref{theo-branchingmechpde} (iii)  in two parts. In Lemma \ref{lem-branchingmechpdepre} we show that $\Psi$ satisfies the PDE in (\ref{eq-pderelationforpsi}) before we prove that $\Psi(r,\lambda^*)=0$, for all $r>0$, in Lemma \ref{lem-rootlambdastar} below.
\begin{lem}\label{lem-branchingmechpdepre} 
The branching mechanism $\Psi$ satisfies the PDE (\ref{eq-pderelationforpsi}), i.e. 
\bex
\frac{\partial}{\partial r} \Psi(r,\theta) + \frac{1}{2}\frac{\partial}{\partial \theta} \Psi^2(r,\theta) + \frac{d-1}{r} \Psi(r,\theta) &=& 2 \psi(\theta) \ \ \ \ r>0, \ \theta\in (0,\infty).  
\ex
\end{lem}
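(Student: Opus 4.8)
The plan is to play the first-order relation \eqref{eq-laplacefunctionalinr}, namely $\partial_r u(r,s,\theta)=\Psi(r,u(r,s,\theta))$, off against the second-order radial equation \eqref{eq-pdebessel1}. Fix $s>0$ and $\theta\ge 0$ and abbreviate $w(r):=u(r,s,\theta)$ for $0<r\le s$; by \eqref{eq-pdebessel1} this $w$ solves $\tfrac12 w''(r)+\tfrac{d-1}{2r}w'(r)=\psi(w(r))$ with $w(s)=\theta$, and by \eqref{eq-laplacefunctionalinr} it satisfies $w'(r)=\Psi(r,w(r))$. Differentiating the latter once more in $r$ via the chain rule (writing $\Psi_r$ and $\Psi_\theta$ for the partial derivatives of $\Psi$ in its first and second argument) gives $w''(r)=\Psi_r(r,w(r))+\Psi_\theta(r,w(r))\,\Psi(r,w(r))$. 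Substituting these expressions for $w'(r)$ and $w''(r)$ into the radial equation and multiplying through by $2$ yields, for every $0<r<s$,
\[
\Psi_r(r,w(r))+\Psi(r,w(r))\,\Psi_\theta(r,w(r))+\frac{d-1}{r}\,\Psi(r,w(r))=2\psi(w(r)).
\]

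Every term on both sides is a continuous function of $r$ up to $r=s$, so the identity survives the limit $r\uparrow s$, at which $w(s)=u(s,s,\theta)=\theta$ by the initial condition in \eqref{eq-pdebessel1}. Since $s>0$ and $\theta\in(0,\infty)$ were arbitrary, relabelling $s$ as $r$ gives
\[
\Psi_r(r,\theta)+\Psi(r,\theta)\,\Psi_\theta(r,\theta)+\frac{d-1}{r}\,\Psi(r,\theta)=2\psi(\theta),\qquad r>0,\ \theta\in(0,\infty),
\]
and since $\Psi(r,\theta)\,\Psi_\theta(r,\theta)=\tfrac12\,\partial_\theta\Psi^2(r,\theta)$ this is precisely \eqref{eq-pderelationforpsi}. (Alternatively one need not take a limit at all: for $r$ fixed, $u(r,s,\cdot)\to\mathrm{id}$ as $s\downarrow r$, so by the intermediate value theorem $w(r)=u(r,s,\theta)$ already sweeps out all of $(0,\infty)$ as $(s,\theta)$ vary, and the displayed identity in $w(r)$ holds at every $\theta\in(0,\infty)$ directly.)

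The two differentiations and the algebraic rewriting with $\tfrac12\partial_\theta\Psi^2$ are routine; the genuinely delicate step — the one I would expect to absorb most of the work — is establishing the regularity that licenses the chain rule. That $\Psi(r,\cdot)$ is $C^1$ (in fact $C^\infty$) on $(0,\infty)$ is immediate from the L\'evy--Khintchine form \eqref{eq-Psi}. That $r\mapsto u(r,s,\theta)$ is $C^2$ on $(0,s]$ follows from standard ODE theory applied to \eqref{eq-pdebessel1}, since $\psi$ is continuous and $r$ stays bounded away from the singular point $0$; together with $w'(r)=\Psi(r,w(r))$ this forces $r\mapsto\Psi(r,w(r))$ to be $C^1$ as well. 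The remaining point, the joint $C^1$-dependence of $\Psi$ on its first variable, can be extracted by solving $\Psi_r(r,w(r))=w''(r)-\Psi_\theta(r,w(r))\,\Psi(r,w(r))$ along the family of curves $r\mapsto(r,u(r,s,\theta))$ and letting $(s,\theta)$ range over their parameters; because $u(r,s,\cdot)\to\mathrm{id}$ as $s\downarrow r$, for each fixed $r$ these curves cover a full $\theta$-neighbourhood, so that $\Psi_r(r,\cdot)$ is genuinely well defined and continuous on $(0,\infty)$.
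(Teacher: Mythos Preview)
Your proof is correct and takes essentially the same approach as the paper: both differentiate the relation $\partial_r u=\Psi(r,u)$ once more, substitute into the radial ODE \eqref{eq-pdebessel1}, and then argue that $u(r,s,\theta)$ sweeps out all of $(0,\infty)$; your ``alternative'' argument (for fixed $r$, letting $(s,\theta)$ vary so that $u(r,s,\theta)$ covers $(0,\infty)$) is in fact exactly the route the paper takes. Your additional paragraph on regularity is a welcome elaboration that the paper leaves implicit.
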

\begin{proof}[Proof of Lemma \ref{lem-branchingmechpdepre}]
Using (\ref{eq-laplacefunctionalinr}), the left-hand side of (\ref{eq-pdebessel1}) becomes
\bex
\lefteqn{ \frac{\partial^2}{\partial r^2} u({r},{s},\theta) + \frac{d-1}{r} \frac{\partial}{\partial r} u({r},{s},\theta)} && \\
 &=& \frac{\partial}{\partial r} \Psi(r,u({r},{s},\theta)) + \frac{d-1}{r} \Psi(r,u({r},{s},\theta)) \\ &=& \frac{\partial}{\partial y}\Psi(y,u({r},{s},\theta))|_{y=r} \\
 && \qquad\qquad + \frac{\partial}{\partial u} \Psi(r,u({r},{s},\theta)) \ \Psi(r,u({r},{s},\theta))  + \frac{d-1}{r} \Psi(r,u({r},{s},\theta)) \\
&=& \frac{\partial}{\partial y}\Psi(y,u({r},{s},\theta))|_{y=r} + \frac{1}{2}\frac{\partial}{\partial u} \Psi^2(r,u({r},{s},\theta)) + \frac{d-1}{r} \Psi(r,u({r},{s},\theta)),
\ex
where ${\p \Psi(\cdot,\cdot)}/{\p u}$ denotes the derivative with respect to the second argument.
Note that this equation holds for all ${s}>{r}$ and $\theta\geq 0$. 
Since $u(r,s,\theta)\to\theta$ as $s\downarrow r$, we see that,  for fixed $r$, the range of $u({r},{s},\theta)$ is $(0,\infty)$ as we  vary ${s}\in(r,\infty)$ and $\theta\in [0,\infty)$. Hence, we can replace $u({r},{s},\theta)$ above by an arbitrary $\theta\in(0,\infty)$ and conclude that the PDE (\ref{eq-pderelationforpsi}) holds true.  
 \end{proof}

Recall that $\lambda^*=\inf\{\lambda\geq 0 : \psi(\lambda)>0 \}$ denotes the root of $\psi$ and define  $\lambda^*(r):=\inf\{\lambda\geq 0 : \Psi(r,\lambda)>0 \}$, for  $r>0$. 
\begin{lem}\label{lem-rootlambdastar}
(i) In the (sub)critical case,  for all $r > 0$, we have  $\lambda^*(r)= 0$. In particular, $\Psi(r,\theta)\geq 0$ for all $\theta\geq 0$.  \\
(ii) In the supercritical case, for all  $r>0$, we have 
$ \lambda^*(r)= \lambda^*$.
In particular, $\Psi(r,\theta)\leq 0$ for $\theta \leq \lambda^*$, while $\Psi(r,\theta) \geq 0$ for $\theta\geq \lambda^*$.
\end{lem}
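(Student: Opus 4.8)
The plan is to deduce everything from the single identity $u(r,s,\lambda^*)=\lambda^*$, valid for all $s\geq r>0$, and then to read off the sign of $\Psi(r,\cdot)$ away from $\lambda^*$ from the sign of $\psi$ on the intervals $[0,\lambda^*]$ and $[\lambda^*,\infty)$. The key point is that $\lambda^*$ is a stationary point for the semigroup defining $u$, and that $\psi$ governs whether $u$ drifts above or below its initial value, hence the sign of $\Psi$.

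First I would establish $\Psi(r,\lambda^*)=0$. Recall that $u(\cdot,s,\theta)$ is characterised as the \emph{unique} non-negative solution of the Bessel semigroup equation \eqref{eq-semigroupbessel} (equivalently, $v_\theta$ is the unique non-negative solution of \eqref{eq-semigroupexitmeasures} with $f\equiv\theta$, or of \eqref{eq-pdebessel1}). Since $\psi(\lambda^*)=0$ by definition of $\lambda^*$, the constant function $\lambda^*$ solves \eqref{eq-semigroupbessel} with $\theta=\lambda^*$: the integrand $\psi(\lambda^*)$ vanishes identically, so neither the Bessel process nor any integrability of $\tau_s$ enters. By uniqueness, $u(r,s,\lambda^*)=\lambda^*$ for all $s\geq r>0$. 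Substituting $\theta=\lambda^*$ into \eqref{eq-laplacefunctionalinr}, the left-hand side $\p_r u(r,s,\lambda^*)$ vanishes, so $\Psi(r,\lambda^*)=\Psi(r,u(r,s,\lambda^*))=0$ for every $r>0$.

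Next I would determine the sign of $\Psi(r,\theta)$ for $\theta\neq\lambda^*$ by combining two facts. First, $u(r,s,\cdot)$ is non-decreasing, being $-\log E_r[e^{-\theta Z_s}]$; together with the previous step this gives $u(r,s,\theta)\geq\lambda^*$ when $\theta\geq\lambda^*$, and $0\leq u(r,s,\theta)\leq\lambda^*$ when $\theta\leq\lambda^*$, for all $s\geq r$. Second, convexity of $\psi$ together with $\psi(0)=\psi(\lambda^*)=0$ gives $\psi\geq0$ on $[\lambda^*,\infty)$ and $\psi\leq0$ on $[0,\lambda^*]$ (in the (sub)critical case $\lambda^*=0$ and $\psi\geq0$ on all of $[0,\infty)$ by $\psi'(0+)\geq0$). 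Feeding these into \eqref{eq-semigroupbessel}: for $\theta\geq\lambda^*$ the integrand $\psi(u(R_z,s,\theta))$ is non-negative, so $u(r,s,\theta)\leq\theta$; for $\theta\leq\lambda^*$ it is non-positive, so $u(r,s,\theta)\geq\theta$. Letting $s\downarrow r$ in \eqref{eq-inhomlaplacefunctional} yields $\Psi(r,\theta)=\lim_{s\downarrow r}(s-r)^{-1}(\theta-u(r,s,\theta))$, so I conclude $\Psi(r,\theta)\geq0$ for $\theta\geq\lambda^*$ and $\Psi(r,\theta)\leq0$ for $\theta\leq\lambda^*$; with $\Psi(r,\lambda^*)=0$ this is the ``in particular'' part of the statement. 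To upgrade this to $\lambda^*(r)=\lambda^*$ I must exclude that $\Psi(r,\cdot)$ vanishes on a right-neighbourhood of $\lambda^*$: since $\Psi(r,\cdot)$ has the form \eqref{eq-Psi} it is real-analytic on $(0,\infty)$, so vanishing on a subinterval would force it to vanish throughout $(0,\infty)$, and then the PDE \eqref{eq-pderelationforpsi} of Lemma \ref{lem-branchingmechpdepre} would give $\psi\equiv0$, contradicting $\psi(\infty)=\infty$. Hence $\Psi(r,\theta)>0$ for $\theta>\lambda^*$ and thus $\lambda^*(r)=\lambda^*$ (resp. $\lambda^*(r)=0$ in the (sub)critical case).

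The main obstacle I anticipate is the non-degeneracy step just described, together with making rigorous the limit $s\downarrow r$ in \eqref{eq-inhomlaplacefunctional} (continuity of $z\mapsto\Psi(z,u(z,s,\theta))$ as $z\uparrow s$, which follows from the regularity of $u$ and $\Psi$ already established in the proof of Theorem \ref{theo-branchingmechpde}); both are routine. The substantive content is simply that $\lambda^*$ is a fixed point of the Bessel semigroup while the sign of $\psi$ dictates the direction in which $u$ moves away from it.
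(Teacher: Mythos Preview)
Your argument is correct and, for part~(ii), takes a genuinely different and considerably more elementary route than the paper. For part~(i) the two proofs are essentially the same: both feed $\psi\geq 0$ into the Bessel semi\-group equation \eqref{eq-semigroupbessel} to obtain monotonicity of $u$ in~$r$ (equivalently $u(r,s,\theta)\leq\theta$), and then let $s\downarrow r$ to read off $\Psi(r,\theta)\geq 0$.

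The substantive difference is in establishing $\Psi(r,\lambda^*)=0$ in the supercritical case. The paper proves this via a probabilistic detour: it shows in Lemma~\ref{lem-martingales}(ii) that $(e^{-\lambda^* Z_s},s\geq r)$ is a $P_r$-martingale, which in turn requires Lemma~\ref{lem-aux} on the convergence $\|X_{D_r^t}\|\to Z_r$ and an argument with space-time exit measures; it then compares this martingale with $M^{\lambda^*}$ of Lemma~\ref{lem-martingales}(i) to force $\Psi(\cdot,\lambda^*)\equiv 0$. You instead observe that the constant $\lambda^*$ trivially solves \eqref{eq-semigroupbessel} because $\psi(\lambda^*)=0$, and invoke the stated uniqueness of non-negative solutions to conclude $u(r,s,\lambda^*)=\lambda^*$; differentiating in~$r$ via \eqref{eq-laplacefunctionalinr} gives $\Psi(r,\lambda^*)=0$ immediately. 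This bypasses Lemmas~\ref{lem-martingales}(ii) and~\ref{lem-aux} entirely. The trade-off is that your argument leans on the uniqueness assertion for \eqref{eq-semigroupexitmeasures}/\eqref{eq-semigroupbessel}, which the paper states without proof; in the supercritical regime this is not quite as standard as in the (sub)critical case, though it does hold here since $\psi$ is locally Lipschitz on $(0,\infty)$ and a Gronwall argument applies. For the sign of $\Psi(r,\cdot)$ on either side of $\lambda^*$ the paper simply appeals to convexity of $\Psi(r,\cdot)$ together with $\Psi(r,0)=-q_r\leq 0$, which is a bit quicker than your route through the semigroup, but both are valid. Your analyticity step to exclude $\Psi(r,\cdot)\equiv 0$ on a right-neighbourhood of $\lambda^*$ is a nice touch that in fact tightens the conclusion $\lambda^*(r)=\lambda^*$ beyond what the paper's own argument explicitly justifies.
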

\begin{proof}[Proof of Lemma \ref{lem-rootlambdastar} (i)]
As we are in the (sub)critical case we have $\psi(\theta)\geq 0$ for all $\theta\geq 0$. For $r<{z}<{s}$, (\ref{eq-semigroupbessel}) yields
\bex
u({r},{s},\theta) &=& \theta - \mbb{E}_{r}^{\text{R}}\int_0^{\tau_{s}} \psi(u(R_{v},{s},\theta)) \ {\rm d}v  \\
&=&  \theta - \mbb{E}_{r}^{\text{R}} \int_0^{\tau_{z}} \psi(u(R_{v},{s},\theta)) \ {\rm d}v  - \mbb{E}_{z}^{\text{R}}\int_0^{\tau_{s}} \psi(u(R_{v},{s},\theta)) \ {\rm d}v  \\
&\leq & \theta - \mbb{E}_{z}^{\text{R}}\int_0^{\tau_{s}} \psi(u(R_{v},{s},\theta)) \ {\rm d}v  \\
&=& u({z},{s},\theta).
\ex
Hence, $u({r},{s},\theta)$ is non-decreasing in $r$. With (\ref{eq-laplacefunctionalinr}) we thus see that,  for all $0<r<{s}$, $\theta \geq 0$, 
\be\label{eq-increasinginr}
\Psi(r,u({r},{s},\theta))=\frac{\partial}{\partial r} u({r},{s},\theta) \geq 0.
\e 
As we take $s\downarrow r$, we get $u(r,s,\theta)\to\theta$ and  hence  $\Psi(r,\theta)\geq 0$ for all $\theta>  0$, $r>0$. Continuity of $\Psi$ ensures $\Psi(r,0)=0$ and, in particular,  $\lambda^*({r})=0$ for all ${r}>0$. 
\end{proof}

The key to the proof of part (ii) of Lemma \ref{lem-rootlambdastar}  is the following lemma.
\begin{lem}\label{lem-martingales} Fix $r>0$. \\
(i)  
 For any $\lambda> 0$, the process 
\be\label{eq-martingalemt}
M_t^\lambda &=& e^{-\lambda Z_{s}} - \int_r^{s} \Psi({v},\lambda) Z_{v} e^{-\lambda Z_{v}} \mbf{1}_{\{Z_{v}<\infty\}}{\rm d}{v}, \ \ {s}\geq r,
\e
is a $P_r$-martingale. \\
(ii) The process 
$
(e^{-\lambda^* Z_{s}},  {s}\geq r)
$ 
 is a $P_r$-martingale.\\
 Here we use the convention $e^{-\lambda Z_{s}} \mbf{1}_{\{Z_{s}=\infty\}}=0$, 
 for any $\lambda>0$.
\end{lem}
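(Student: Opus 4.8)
I would verify both martingale properties by hand, using the (branching) Markov property of $Z$ together with the differential relations (\ref{eq-laplacefunctionalint})--(\ref{eq-laplacefunctionalinr}) for the Laplace functional $u$. One point requires care: at this stage one may not yet appeal to the fact that $\lambda^*$ is a root of each $\Psi(r,\cdot)$ --- that is exactly what Lemma \ref{lem-rootlambdastar}(ii) will extract from the present lemma --- so the proof of (ii) must avoid the relation $\frac{\p}{\p r}u(r,s,\theta)=\Psi(r,u(r,s,\theta))$ and instead go back to the defining integral equation (\ref{eq-semigroupexitmeasures}).

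For part (ii), recall from the proof of Theorem \ref{theo-branchingmechpde}(ii) that $u(z,s,\theta)=v_\theta(x,s)$ for any $x\in\p D_z$, where $v_\theta$ is the unique non-negative solution of (\ref{eq-semigroupexitmeasures}) with terminal data $f\equiv\theta$. Taking $\theta=\lambda^*$, the constant function $\lambda^*$ is a non-negative solution of (\ref{eq-semigroupexitmeasures}), since the integral term $\mbb E_x[\int_0^{T_s}\psi(\lambda^*)\,{\rm d}z]$ vanishes (because $\psi(\lambda^*)=0$ by continuity of $\psi$, and $\mbb E_x[T_s]<\infty$); by uniqueness, $u(z,s,\lambda^*)=\lambda^*$ for all $0<z\le s$. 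Conditioning on $\mcal F_{D_z}$ for $r\le z\le s$ and using the Markov property of $Z$ together with (\ref{eq-laplacedef}) then gives $E_r[e^{-\lambda^* Z_s}\mid\mcal F_{D_z}]=e^{-u(z,s,\lambda^*)Z_z}=e^{-\lambda^* Z_z}$, which is the martingale property; the convention $e^{-\lambda^* Z_s}\mbf 1_{\{Z_s=\infty\}}=0$ is consistent with the right-continuous extension $a\mapsto e^{-\lambda^* a}$ at $a=\infty$, and in the (sub)critical case $\lambda^*=0$ and there is nothing to prove.

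For part (i), fix $\lambda>0$ and $r\le z\le s$. By the Markov property and (\ref{eq-laplacedef}), $E_r[e^{-\lambda Z_s}\mid\mcal F_{D_z}]=e^{-u(z,s,\lambda)Z_z}$, while differentiating $E_{a,z}[e^{-\lambda Z_v}]=e^{-u(z,v,\lambda)a}$ in $\lambda$ (legitimate by dominated convergence, using $xe^{-\lambda x}\le(e\lambda)^{-1}$) gives $E_{a,z}[Z_v e^{-\lambda Z_v}\mbf 1_{\{Z_v<\infty\}}]=a\,\frac{\p}{\p\lambda}u(z,v,\lambda)\,e^{-u(z,v,\lambda)a}$ for $v\ge z$. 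Splitting $\int_r^s=\int_r^z+\int_z^s$ in the definition of $M_s^\lambda$, the first integral is $\mcal F_{D_z}$-measurable; to the second I would apply the conditional form of Fubini's theorem (permissible since $xe^{-\lambda x}\le(e\lambda)^{-1}$ and $v\mapsto\Psi(v,\lambda)$ is locally integrable) followed by the Markov property, obtaining $E_r[\int_z^s\Psi(v,\lambda)Z_v e^{-\lambda Z_v}\mbf 1_{\{Z_v<\infty\}}\,{\rm d}v\mid\mcal F_{D_z}]=\int_z^s\Psi(v,\lambda)\,Z_z\,\frac{\p}{\p\lambda}u(z,v,\lambda)\,e^{-u(z,v,\lambda)Z_z}\,{\rm d}v$. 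Writing $Y:=Z_z$, the identity $E_r[M_s^\lambda\mid\mcal F_{D_z}]=M_z^\lambda$ now collapses to $e^{-\lambda Y}-e^{-u(z,s,\lambda)Y}=-\int_z^s\Psi(v,\lambda)\,Y\,\frac{\p}{\p\lambda}u(z,v,\lambda)\,e^{-u(z,v,\lambda)Y}\,{\rm d}v$. Since $u(z,z,\lambda)=\lambda$, the left-hand side equals $\int_z^s\frac{\p}{\p v}\!\left(-e^{-u(z,v,\lambda)Y}\right){\rm d}v=\int_z^s Y\,\frac{\p}{\p v}u(z,v,\lambda)\,e^{-u(z,v,\lambda)Y}\,{\rm d}v$, and inserting $\frac{\p}{\p v}u(z,v,\lambda)=-\Psi(v,\lambda)\,\frac{\p}{\p\lambda}u(z,v,\lambda)$ from (\ref{eq-laplacefunctionalint}) closes the computation.

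The hard part is purely analytic housekeeping: justifying the differentiation under $E$ and the conditional Fubini step, and dealing correctly with $\{Z_v=\infty\}$ via the stated convention (the restriction $\lambda>0$ in (i) is exactly what makes the bound $xe^{-\lambda x}\le(e\lambda)^{-1}$ available, since $Z_v$ itself need not be $P_r$-integrable). Conceptually nothing is surprising: $M^\lambda$ is Dynkin's martingale for the time-inhomogeneous CSBP $Z$, whose extended generator at time $v$ maps $x\mapsto e^{-\lambda x}$ to $x\mapsto\Psi(v,\lambda)x e^{-\lambda x}$, and the displays above are just the direct verification of this.
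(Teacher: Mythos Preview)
Your proof of part (i) is correct and essentially the same as the paper's, just organised differently: the paper first shows $E_{a,v}[M_s^\lambda]=e^{-\lambda a}$ for all $a,v$ by differentiating in $s$ and invoking (\ref{eq-laplacefunctionalint}), then deduces the martingale property via Markov; you verify the conditional identity $E_r[M_s^\lambda\mid\mcal F_{D_z}]=M_z^\lambda$ directly. Both computations hinge on the same relation $\frac{\p}{\p s}u=-\Psi(s,\lambda)\frac{\p}{\p\lambda}u$.

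Your argument for part (ii) is correct but \emph{genuinely different} from the paper's, and considerably shorter. You observe that the constant function $\lambda^*$ solves the defining integral equation (\ref{eq-semigroupexitmeasures}) because $\psi(\lambda^*)=0$, so by the stated uniqueness $u(z,s,\lambda^*)=\lambda^*$ for all $0<z\le s$, and the martingale property follows at once from Markov. The paper instead takes a longer probabilistic route: it introduces the space-time exit measures $X_{D_s^t}$, proves an auxiliary limiting result $\lim_{t\to\infty}\|X_{D_s^t}\|=Z_s$ (Lemma \ref{lem-aux}), and then shows $e^{-\lambda^* Z_s}=\mbf E_\mu[\mbf 1_{\{\|X_u\|\to 0\}}\mid\sigma(Z_s)]$, from which the martingale property follows by the tower property. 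Your approach is purely analytic and more economical for the purpose at hand (feeding into Lemma \ref{lem-rootlambdastar}); the paper's approach costs more but yields the additional probabilistic interpretation of $e^{-\lambda^* Z_s}$ as the conditional extinction probability of $X$ given the exit-measure filtration. You were also right to flag the circularity concern: your route avoids any use of $\Psi(r,\lambda^*)=0$, relying only on $\psi(\lambda^*)=0$ and the uniqueness in (\ref{eq-semigroupexitmeasures}).
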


\begin{proof}[Proof of part (i)]
Taking expectations in (\ref{eq-martingalemt}) and interchanging expectation and integral gives 
\bex
E_r[M_{s}^{\lambda}] &=& e^{-u({r},{s},\lambda)} - \int_r^{s} \Psi({v},\lambda) \frac{\partial}{\partial \lambda} u({r},{v},\lambda) \  e^{-u({r},{v},\lambda)} {\rm d}{v}.
\ex
Differentiating in ${s}$, together with (\ref{eq-laplacefunctionalint}), gives
\bex
\frac{\partial}{\partial {s}} E_r[M_{s}^{\lambda}] &=& \Big(- \frac{\partial}{\partial {s}} u({r},{s},\lambda) - \Psi({s},\lambda) \frac{\partial}{\partial \lambda} u({r},{s},\lambda) \Big) e^{-u({r},{s},\lambda)} = 0.
\ex
Hence, $E_r[M_{s}^\lambda]$ is constant for all ${s}\geq r$ and in particular, taking ${s}=r$, equal to $e^{-\lambda}$. Note that the same computation gives that $E_{a , {v}}[M_{s}^\lambda] = e^{-\lambda a}$, for $a>0$ and $0<r\leq {v}\leq {s}$. An application of the Markov property then shows that $(M^\lambda_{s}, {s}\geq r)$ is a martingale for any $\lambda > 0$. 
\end{proof}

The proof of Lemma \ref{lem-martingales} (ii) relies on the following idea. Since $(||X_{t}||, {t}\geq 0)$ is a CSBP with branching mechanism $\psi$ it is well-known that the process $(e^{-\lambda^*||X_{t}||}, {t}\geq 0)$ is a martingale with respect to the filtration $(\mcal{F}_{t},{t}\geq 0)$ where $\mcal{F}_{t}= \sigma(||X_{u}||,  {u}\leq {t})$. The martingale property follows on account of the fact that 
\bex
\mbf{E}_\mu[\mbf{1}_{\{||X_{u}||\to 0\}} | \mcal{F}_{{t}}]
=e^{-\lambda^* ||X_{{t}}||},  \ {t} \geq 0,
\ex 
by a simple application of  the tower property. Now, fix $r>0$, and consider  the filtration $(\mcal{G}_{{s}},{s}\geq {r})$ where $\mcal{G}_{{s}}=\sigma(||X_{D_{v}}||,{r}\leq {v}\leq {s}) = \sigma(Z_{v},{r}\leq{v}\leq {s})$ instead. If we can show that, for $\mu\in\mcal{M}_F(\p D_{r})$,
\bex
\mbf{E}_\mu[\mbf{1}_{\{||X_{u}||\to 0\}} | \mcal{G}_{s}]
=e^{-\lambda^* ||X_{D_s}||} = e^{-\lambda^* Z_s}, 
\ex
holds, then we can deduce in the same way that the process $(e^{-\lambda^*||X_{D_{s}}||} , {s}\geq {r})$ is a martingale with respect to the filtration $(\mcal{G}_{{s}}, {s}\geq {r})$. The proof is slightly cumbersome and therefore  postponed  to the end of this section. 

The proof of Lemma  \ref{lem-rootlambdastar} (ii) is now a simple consequence of Lemma \ref{lem-martingales}.
\begin{proof}[Proof of Lemma \ref{lem-rootlambdastar} (ii)]
By Lemma \ref{lem-martingales}, the process
\bex
e^{-\lambda^* Z_{s}} - M_t^{\lambda^*} = \int_r^{s} \Psi({v},\lambda^*) Z_{v} e^{-\lambda^* Z_{v}} \mbf{1}_{\{Z_{v}<\infty \}} \ {\rm d}{v}, \ \ {s}\geq r, 
\ex
must be a $P_r$-martingale. However this is only possible if the expectation of the Lebesgue-integral above is constant
in ${s}$ which requires  $\Psi({s},\lambda^*)=0$ on $\{0<Z_{s}<\infty\}$ for all ${s}\geq r$. Since the event $\{0<Z_{s}<\infty\}$ has positive probability  under $P_r$, we reason that $\Psi({s},\lambda^*)=0$ for all  ${s}\geq r$. Choosing $r>0$ arbitrarily small yields $\Psi({s},\lambda^*)=0$ for all ${s}>0$. Convexity of $\Psi({s},\theta)$ immediately implies that $\Psi({s},\theta)\geq 0$ for $\theta\geq \lambda^*$ and, further noting that $\Psi({s},0)\leq 0$,  that $\Psi({s},\theta)\leq 0$ for $\theta \leq \lambda^*$.
\end{proof}

\begin{proof}[Proof  of Theorem \ref{theo-branchingmechpde} (iii)]
Combine Lemma \ref{lem-branchingmechpdepre} and \ref{lem-rootlambdastar}. 
\end{proof}

Let us now come to the proof of Lemma \ref{lem-martingales} (ii). For $r>0$, $t\geq 0$, define the space-time domain $D_r^{t}$ as
\bex
D_r^{t} &=& \{(x,{u}): ||x||< r, {u}< {t}\} \subset \mbb{R}^d\times [0,\infty). 
\ex
Let $(X_{D_r^{t}}, {t}\geq 0, r>0)$ be the system of branching Markov exit measures describing the mass of $X$ as it first exits the space-time domains $D_{r}^{t}$,  see again Dynkin \cite{dynkinbemsp01}. \\ 
For the proof of Lemma \ref{lem-martingales} (ii), we will need the following  result which seems rather obvious but nevertheless needs a careful proof.
\begin{lem}\label{lem-aux} Let $r>0$.  
For any $\mu\in\mcal{M}_F(D_r)$, we have $\mbf{P}_\mu$-a.s.,
\bex
\lim_{{t}\to\infty} ||X_{D_r^{{t}}}|| &=& 
||X_{D_r}|| =Z_r. 
\ex 
\end{lem}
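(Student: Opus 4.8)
The plan is to decompose $\|X_{D_r^{t}}\|=L_t+T_t$, where $L_t$ is the mass that $X_{D_r^{t}}$ puts on the lateral part $\partial D_r\times[0,t]$ of the boundary $\partial D_r^{t}$ and $T_t$ the mass it puts on the top $\overline{D_r}\times\{t\}$, and then to show $L_t\uparrow Z_r$ and $T_t\to 0$, $\mbf{P}_\mu$-a.s.\ (in the extended sense, allowing the value $+\infty$). The first of these is soft. By the branching Markov property for exit measures applied to the nested space--time domains $D_r^{t}\subset D_r^{t'}$, $t<t'$, the part of $X_{D_r^{t'}}$ carried by $\partial D_r\times[0,t]$ is already frozen and coincides with the lateral part of $X_{D_r^{t}}$; hence $t\mapsto L_t$ is non-decreasing, with an a.s.\ limit $L_\infty\in[0,\infty]$. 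Since $D_r^{t}\uparrow D_r\times[0,\infty)$ and a mass element exits the infinite cylinder $D_r\times[0,\infty)$ only through $\partial D_r\times[0,\infty)$, the consistency of exit measures under increasing domains (Dynkin~\cite{dynkinbemsp01}) identifies $X_{D_r\times[0,\infty)}$ with the spatial exit measure $X_{D_r}$, giving $L_\infty=\|X_{D_r}\|=Z_r$. In particular $\|X_{D_r^{t}}\|\ge L_t\uparrow Z_r$, which already proves the claim on the event $\{Z_r=\infty\}$.

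It therefore remains to prove $T_t\to 0$, $\mbf{P}_\mu$-a.s., on $\{Z_r<\infty\}$. One identifies $T_t=\|\hat X_t\|$, where $\hat X$ is the superprocess in $D_r$ with branching mechanism $\psi$ whose underlying motion is Brownian motion killed on first exiting $D_r$ --- the analogue, for the top face of a space--time domain, of the identification leading to~(\ref{eq-semigroupbessel}) via~(\ref{eq-semigroupexitmeasures}). In the (sub)critical case this is immediate: killing only removes mass, so $\hat X_t\le X_t$ as measures, whence $T_t\le\|X_t\|$, and $\|X_t\|$ is a (sub)critical continuous-state branching process with mechanism $\psi$, so $\|X_t\|\to 0$, $\mbf{P}_\mu$-a.s.

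The supercritical case is the crux, and the obstacle is to rule out a persistent amount of mass being trapped inside $D_r$ on $\{Z_r<\infty\}$. I would proceed in two steps. First, a dichotomy: $\|\hat X_t\|\to 0$ or $\|\hat X_t\|\to\infty$, $\mbf{P}_\mu$-a.s. Since the killed Brownian motion leaves the bounded domain $D_r$ in a time with exponential moments, this is the extinction-versus-unbounded-growth dichotomy for superprocesses in a domain; alternatively it can be extracted from the composition property of the Laplace functional of $\hat X$ together with a maximum-principle analysis of $\tfrac12\Delta w=\psi(w)$ on $D_r$ with $w|_{\partial D_r}=0$. Second, $\{\|\hat X_t\|\to\infty\}\subseteq\{Z_r=\infty\}$ up to a $\mbf{P}_\mu$-null set: by the branching Markov property $Z_r=L_t+W_t$, with $W_t$ the mass subsequently frozen on $\partial D_r$ by the superprocess continued from $\hat X_t$, and starting a supercritical superprocess from the mass $\hat X_t$, which tends to $\infty$, forces $W_t\to\infty$ and hence $Z_r=\infty$. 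Combining, on $\{Z_r<\infty\}$ the limit $\lim_t\|\hat X_t\|$ exists and is not $\infty$, so $T_t\to 0$; together with $L_t\uparrow Z_r$ this gives $\|X_{D_r^{t}}\|=L_t+T_t\to Z_r$.

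The genuinely delicate points are (a) the dichotomy for the \emph{absorbed} superprocess, whose total mass is not itself Markovian, and (b) the implication ``$\hat X$ survives $\Rightarrow Z_r=\infty$''. A more hands-on route, perhaps better suited to the present setting and replacing both (a) and (b), is to use that $L_{t+1}-L_t\to 0$ on $\{Z_r<\infty\}$: by the first-moment formula for exit measures, $L_{t+1}-L_t$ dominates (up to a strictly positive factor on each compact $K\subset D_r$) the trapped mass $\hat X_t(K)$, while the mass of $\hat X_t$ close to $\partial D_r$ leaks out within one unit of time and is therefore also controlled by $L_{t+1}-L_t$; hence $T_t=\|\hat X_t\|\to 0$. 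Either way, the difficulty is confined entirely to the supercritical regime.
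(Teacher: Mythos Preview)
Your overall architecture coincides with the paper's: decompose $\|X_{D_r^t}\|=L_t+T_t$ into the lateral and top contributions, use monotonicity to get $L_t\uparrow Z_r$, and reduce everything to the implication ``$T_t\not\to 0\Rightarrow Z_r=\infty$''. The paper likewise invokes a $0$/$\infty$ dichotomy for the top mass (quoting Proposition~7 of \cite{englaenderkyprianou04}), so that on $\{T_t\not\to 0\}$ one may assume $T_t\to\infty$. Your treatment of the (sub)critical case via the pathwise comparison $\hat X_t\le X_t$ is a pleasant shortcut the paper does not take.

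The gap is in your justification of $\{\|\hat X_t\|\to\infty\}\subset\{Z_r=\infty\}$ in the supercritical case. You write that ``starting a supercritical superprocess from the mass $\hat X_t$, which tends to $\infty$, forces $W_t\to\infty$'', but $W_t=Z_r-L_t$ is non-increasing in $t$; it cannot tend to $\infty$ unless it equals $\infty$ throughout, which is exactly the conclusion at stake. What is actually available is only a \emph{conditional-law} statement: given $\mcal F_{D_r^t}$, the random variable $W_t$ is unlikely to be small when $\|\hat X_t\|$ is large. Converting this into the almost-sure conclusion $Z_r=\infty$ needs an extra argument. Your alternative route has the same defect: a first-moment formula gives $\mbf{E}[L_{t+1}-L_t\mid\mcal F_{D_r^t}]\ge c_K\,\hat X_t(K)$, not the pathwise domination you invoke, so $L_{t+1}-L_t\to 0$ on $\{Z_r<\infty\}$ does not by itself force $\hat X_t(K)\to 0$.

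The paper supplies precisely the missing mechanism. On $\{T_t\to\infty\}$ it constructs stopping times $T_0<T_1<\cdots$, separated by at least one time unit, at which the trapped mass exceeds a fixed level $K$; by the branching Markov property and monotonicity in the initial mass, the conditional probability that the lateral mass accrued during $[T_i,T_{i+1})$ exceeds a fixed $M$ is bounded below by some $\epsilon(M,K)>0$, uniformly in $i$. A conditional Borel--Cantelli argument then shows that infinitely many of these increments exceed $M$, whence $Z_r=\infty$. This stopping-time/Borel--Cantelli device is what upgrades the distributional information to an almost-sure statement, and it is the ingredient both of your proposed routes are lacking.
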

 
\begin{proof}
For $r>0$, $t\geq 0$, denote by $\partial D_r^{t}$ the boundary of the set $D_r^{t}$, i.e. 
\bex
\partial D_r^{t} &=& ( \ \{x: ||x||=r \} \times [0,{t})\ ) \cup (\ \{x: ||x||< r\} \times \{{t}\} \ ) \\
&=:& \partial D_r^{{t}-} \cup \partial D_{r-}^{t}.
\ex
By monotonicity, we have $\lim_{t\to\infty} \big| \big|  X_{D_r^{t}}\big|_{\p D_{r}^{{t}-}} \big| \big|  = \big| \big|  X_{D_{r}}\big| \big| = Z_r$, $\mbf{P}_\mu$-a.s. Next, define the event that $X$ becomes extinguished within $D_r$, i.e. 
\bex
\mcal{E}(X,D_r) &:=& 
 \{ \lim_{{t}\to\infty} \big|\big| X_{D_r^{t}}\big|_{\p D_{{r}-}^{t}} \big|\big|  =0 \}. \ex 
On the complement of $\mcal{E}(X,D_r)$, we have \bex
\lim_{t\to\infty} \big|\big| X_{D_r^{t}}\big|_{\p D_{{r}-}^{t}} \big|\big|=\infty, \ \  \mbf{P}_\mu-a.s.
\ex 
This is to say that, on  $\mcal{E}(X,D_r)^c$, the total mass within the open ball $D_r$ at time $t$ tends to infinity as  $t$ tends to infinity. This follows from Proposition 7 in \cite{englaenderkyprianou04} which says that 
$\limsup_{t\to\infty} ||X_{D_{r}^{t}}\big|_{B\times\{{t}\}}|| \in \{0,\infty\}$,
 $\mbf{P}_\mu$-a.s. for any nonempty open set $B\subset D_r$ (noting
that Proposition 7 in \cite{englaenderkyprianou04} indeed holds for the general branching mechanism
we are considering here). 
Hence, we have shown so far that
\bex
\lim_{{t}\to\infty} ||X_{D_r^{{t}}}|| &=& 
Z_r + \infty \mbf{1}_{\mcal{E}(X,{D_r})^c}.
\ex
Thus it remains to prove that, on $\mcal{E}(X,{D_r})^c$, $Z_r$ is also infinite. Fix a $K>0$. Thanks to Proposition 7 of \cite{englaenderkyprianou04}, on $\mcal{E}(X,{D_r})^c$, we can define an infinite sequence of stopping times
\bex
T_0 &=& \inf\{t>0 : \big|\big|X_{D_r^{t}}\big|_{\p D_{r-}^{{t}}}\big|\big|\geq K \} \\
T_{i+1} &=& \inf\{t>T_i+1: \big|\big|X_{D_r^{t}}\big|_{\p {D_{r-}^{t}}}\big|\big| \geq K\}, \ i=1,2,...
\ex
At times $T_i$, $i\geq 0$, the total mass within the open ball $D_r $ is greater than or equal to $K$. 
Fix an $M>0$ and define the event
\bex
A_i= \{\big|\big| X_{D_r^{T_i}}\big|_{[T_{i-1},T_i)\times \partial D_r}\big|\big| > M\}, \ \ \ i=1,2,...
\ex
which is the event that the mass that exits $D_r$ during the time interval $[T_{i-1},T_i)$ exceeds $M$. Note that there exists a strictly positive constant $\epsilon(M,K)$, such that
\be\label{eq-boundonprobs}
\mbf{P}_{X_{D_{r}^{T_i}}}(A_{i+1})& \geq & \mbf{P}_{K\delta_0}(A_1)  \nonumber \\
&\geq & \mbf{P}_{K\delta_0}(\big|\big| X_{D_r^1}\big|_{[0,1)\times \p D_r}\big|\big| >M) >\epsilon(M,K).
\e
Thus, we can partition time into infinitely many intervals $[T_i,T_{i+1})$, $i\geq 0$, of length at least $1$. During each time interval the mass that exits $D_r$, and thus contributes to $Z_r$, exceeds $M$ with positive probability. These probabilities are uniformly bounded from below by $\epsilon(M,K)>0$ in (\ref{eq-boundonprobs}). Therefore $||X_{D_r}|| = Z_r=\infty$, $\mbf{P}_\mu$-a.s on the event  $\mcal{E}(X,{D_r})^c$. This completes the proof.
\end{proof}

\begin{proof}[Proof of Lemma \ref{lem-martingales} (ii)]
For $s>0$, $t\geq 0$, define 
$\mcal{F}_{D_{s}^{t}} = \sigma(X_{D_{s'}^{{t}'}}, s'\leq s,  {t}'\leq {t})$.  
Fix $r>0$. The characterising branching Markov property for exit measures, see for instance Section 1.1 in \cite{dynkinkuznetsovnmeasures04}, yields that, for $\mu\in\mcal{M}_F(D_{r})$, ${s}\geq {r}$ and ${u}\geq {t} \geq 0$, we have
\be\label{eq-laplaceprojection2}
\mbf{E}_\mu[e^{-\theta ||X_{u}||} |\mcal{F}_{D_{s}^{t}}] = \exp\{-\la {w}_\theta(u-\cdot) , X_{D_{{s}}^{t}} \ra \}.
\e
where $w_\theta$ is the Laplace functional of the standard CSBP $(||X_{u}||, {u}\geq 0)$ with branching mechanism $\psi$. 
Taking $\theta=\lambda^*$, it is well known that $w_{\lambda^*}({t}) = \lambda^*$ for all $t\geq 0$. Therefore (\ref{eq-laplaceprojection2}), with $\theta$ replaced by $\lambda^*$, turns into
\bex
\mbf{E}_\mu[e^{-\lambda^* ||X_{u}||} |\mcal{F}_{D_{s}^{t}}] &=& \exp\{-\int w_{\lambda^*}({{u}-{t'}}) \ {\rm d}X_{D_{{s}}^{t}}({x},{t'})\} 
= e^{- \lambda^* ||X_{D_{s}^{t}}|| }.
\ex
Taking $u\to\infty$, we conclude
\be\label{eq-prelimitint}
\mbf{E}_\mu[\mbf{1}_{\{||X_{u}||\to 0\}} |\mcal{F}_{D_{s}^{t}}] = \lim_{u\to\infty} \mbf{E}_\mu[e^{-\lambda^* ||X_{u}||} |\mcal{F}_{D_{s}^{t}}] =  e^{- \lambda^* ||X_{D_{s}^{t}}|| }.
\e 
Now, we want to take the limit in $t$. By Lemma \ref{lem-aux}, we have $||X_{D_{s}^{t}}|| \to Z_{s}$ as $t\to\infty$ and thus the right-hand side of (\ref{eq-prelimitint}) tends to $\exp\{- \lambda^* Z_s \}$, $\mbf{P}_\mu$-a.s. For the left-hand side, by the strong Markov property, we can replace  $\mcal{F}_{D_{s}^{t}}$ by $\sigma(X_{D_{s}^{t}})$. Further, note that
$\mbf{P}_\mu(||X_{u}||\to 0)  = e^{-\lambda^* ||\mu||}$ for any $\mu\in\mcal{M}_F(D_s)$, with $\mbf{P}_\mu(||X_{u}||\to 0)  = 0$ if $\mu$ has infinite mass. Thus, the event $\{||X_{u}||\to 0\}$ only depends on the total mass of $\mu$. Therefore we can replace   $\sigma(X_{D_{s}^{t}})$ by $\sigma(||X_{D_{s}^{t}}||)$ on the left-hand side in (\ref{eq-prelimitint}). To sum up, we get
\bex
\mbf{E}_\mu[\mbf{1}_{\{||X_{u}||\to 0\}} |\mcal{F}_{D_{s}^{t}}] =\mbf{E}_\mu[\mbf{1}_{\{||X_{u}||\to 0\}} |\sigma(X_{D_{s}^{t}})] = \mbf{E}_\mu[\mbf{1}_{\{||X_{u}||\to 0\}} |\sigma(||X_{D_{s}^{t}}||)].
\ex
By Lemma \ref{lem-aux}, we have $\lim_{t\to\infty} ||X_{D_s^t}|| = Z_s$, with the possibility of the limit being infinite.  
Hence,  
\bex
\lim_{t\to\infty}\mbf{E}_\mu[\mbf{1}_{\{||X_{u}||\to 0\}} |\sigma(||X_{D_{s}^{t}}||)] =  \mbf{E}_\mu[\mbf{1}_{\{||X_{u}||\to 0\}} |\sigma(Z_{s})].
\ex 
Putting the pieces together, we get
\bex
\mbf{E}_\mu[\mbf{1}_{\{||X_{u}||\to 0\}} |\sigma(Z_{s})] 
= \lim_{t\to\infty}\mbf{E}_\mu[\mbf{1}_{\{||X_{u}||\to 0\}} |\mcal{F}_{D_{s}^{t}}] 
= \lim_{t\to\infty}  e^{- \lambda^* ||X_{D_{s}^{t}}|| }
= e^{- \lambda^* Z_s}.
\ex 
Finally take $\mu\in\mcal{M}_F(\p D_{r})$ and let $r\leq s' \leq s$. Then conditioning on $\sigma(Z_{{s}})$ and using the tower property, gives
\bex
 e^{-\lambda^* Z_{s'}} &=& \mbf{E}_\mu[\mbf{1}_{\{||X_{u}||\to 0\}} | \sigma(Z_{s'})] \\
 &=& \mbf{E}_\mu[\mbf{E}[\mbf{1}_{\{||X_{u}||\to 0\}} | \sigma(Z_{s}) ] | \sigma(Z_{s'})] 
={E}_{r}[ e^{-\lambda^* Z_s}| \sigma(Z_{s'})],
\ex
from which we conclude that $(e^{-\lambda^* Z_s}, s\geq {r})$ is a  $P_{r}$-martingale. 
\end{proof}
%%%%%%%%%% %%%%%%%%%%%%%%%%%%%%%%%%%%%%%%%%%%%%%%%%%%%%%%%%%%%%%%%%%
%%%%%%%%%%%%%%%%%%%%%%%%%%%%%%%%%%%%%%%%%%%%%%%%%%%%%%%%%%%%%%%%%
%%%%%%%%%%%%%%%%%%%%%%%%%%%%%%%%%%%%%%%%%%%%%%%%%%%%%%%%%%%%%%%%%%%

\section{The limiting branching mechanism - Proof of Proposition \ref{prop-changeshape} 
and Theorem \ref{theo-branchingmechlimit}} \label{sec-branchingmechlim}
\subsection{Changing shape - Proof of Proposition \ref{prop-changeshape}}
\begin{proof}[Proof of Proposition \ref{prop-changeshape}]
(i) Fix $0<r\leq r'$, $h>0$ and $\theta\geq 0$. The first step is to show that
$u({r}, {r}+h, \theta) \geq u({r}',{r}'+h,\theta)$.
Said  another way, we want to show that
\be\label{eq-inequality(sub)critical2}
E_{r'}[e^{-\theta Z_{r'+h}}] \geq   E_r[e^{-\theta Z_{r+h}}]. 
\e
Recall that $(Z_{r+h},P_r)$ is the total mass of $X$ as it first exists the ball $D_{r+h}$, when $X$ is initiated from one unit of mass distributed on $\p D_r$. By radial symmetry of $X$, we may assume that the initial mass is concentrated in a point $x_r\in\p D_r$, i.e. $ E_r[e^{-\theta Z_{r+h}}] =  \mbf{E}_{\delta_{x_r}}[e^{-\theta ||X_{D_{r+h}}||}]$. \\
Now we shift the point $x_r$ to the point $x_{r'}\in \p D_{r'}$ where $||x_{r'}-x_r|| = r'-r$. We also shift the ball $D_{r+h}$ in the same direction and by the same distance $r'-r$ and denote its new centre by $x_{r'-r}$, see Figure \ref{fig-shiftingballs}. 
\begin{figure}
\centering
\includegraphics[scale=0.4]{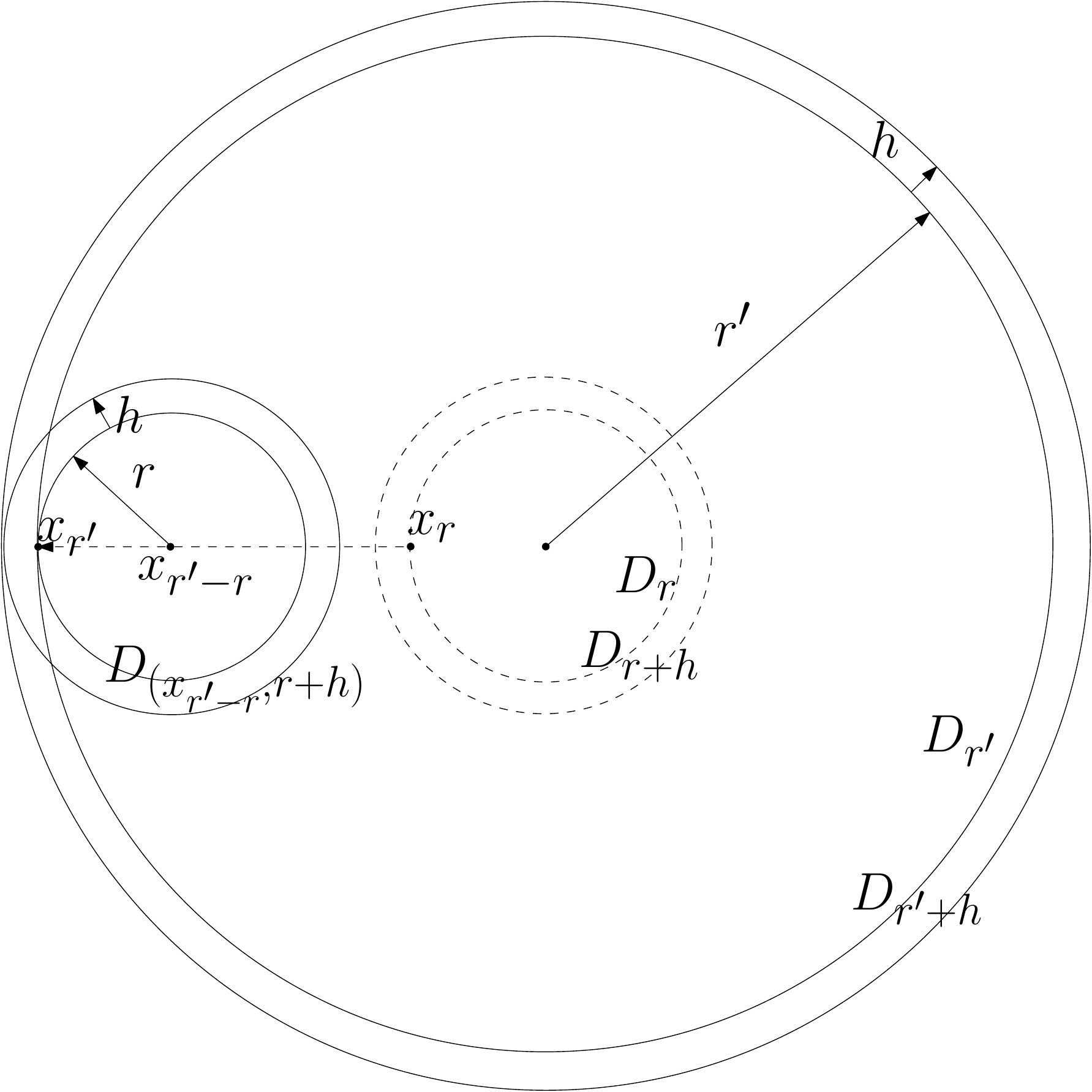}
\caption{Shifting the balls $D_r$ and $D_{r+h}$ by a distance $r'-r$}
 \label{fig-shiftingballs}
\end{figure}
By translation invariance of $X$ we then have
\bex
E_r[e^{-\theta Z_{r+h}}] = \mbf{E}_{\delta_{x_r}}\Big[e^{-\theta||X_{D_{r+h}}||}\Big] = \mbf{E}_{\delta_{x_{r'}}}\Big[ e^{-\theta||X_{D(x_{r'-r},r+h)}||}\Big],
\ex
where  $D(x_{r'-r},r+h)$ is the open ball centred at $x_{r'-r}$ with radius $r+h$.
We can then write (\ref{eq-inequality(sub)critical2}) as
\be\label{eq-inequality(sub)critical3}
\mbf{E}_{\delta_{x_{r'}}} \Big[e^{-\theta ||X_{D_{r'+h}}||}\Big] &\geq& \mbf{E}_{\delta_{x_{r'}}}\Big[ e^{-\theta||X_{D(x_{r'-r},r+h)}||}\Big].
\e
Recall that equation (\ref{eq-strongmarkov}) shows that the process of branching exit measure $X_{D_{{s}}}$ indexed by the increasing sequence of balls $(D_{{s}},{s}\geq {r})$ has the strong Markov property. By Dynkin \cite{dynkinbemsp01}, the strong Markov property holds more generally for any increasing sequence of open Borel subsets of $\mbb{R}^d$. In particular,
\be\label{eq-strongmarkovgeneral}
\mbf{E}_{\delta_{x_{r'}}} \Big[e^{-\theta ||X_{D_{r'+h}}||}\Big| \mcal{F}_{{D(x_{r'-r},r+h)}} \Big] =  \mbf{E}_{X_{D(x_{r'-r},r+h)}} \Big[e^{-\theta ||X_{D_{r'+h}}||}\Big],
\e\ 
where $\mcal{F}_{{D(x_{r'-r},r+h)}} = \sigma(X_{D(x_{r'-r}, s)}, s\leq r+h)$.
Hence, assuming that
\be\label{eq-usingsmp}
\mbf{E}_{X_{D(x_{r'-r},r+h)}}\Big[e^{-\theta ||X_{D_{r'+h}}||}\Big] \geq e^{-\theta ||X_{D(x_{r'-r},r+h)}||}
\e
holds true, we get, together with (\ref{eq-strongmarkovgeneral}), that
\bex
\mbf{E}_{\delta_{x_{r'}}} \Big[e^{-\theta ||X_{D_{r'+h}}||}\Big] 
&=& \mbf{E}_{\delta_{x_{r'}}} \Big[\mbf{E}_{\delta_{x_{r'}}} \big[e^{-\theta ||X_{D_{r'+h}}||}\big| \sigma(X_{D(x_{r'-r},r+h)}) \big]\Big] \\
&=&  \mbf{E}_{\delta_{x_{r'}}} \Big[\mbf{E}_{X_{D(x_{r'-r},r+h)}} \big[e^{-\theta ||X_{D_{r'+h}}||}\big]\Big] \\
&\geq& \mbf{E}_{\delta_{x_{r'}}} \Big[ e^{-\theta ||X_{D(x_{r'-r},r+h)}||}\Big], 
\ex
which is the desired inequality (\ref{eq-inequality(sub)critical3}). 
Thanks to the branching Markov  property for exit measures, for (\ref{eq-usingsmp}) to hold it suffices to show that 
\be\label{eq-inequality(sub)critical4}
\mbf{E}_{\delta_{x}}\Big[e^{-\theta ||X_{D_{r'+h}}||}\Big]  \geq e^{-\theta}, \ \ 
\text{ for any }  x\in\partial D{(x_{r'-r},r+h)}.
\e 
For fixed $x\in \partial D{(x_{r'-r},r+h)}$, set $s=||x||$ and note that $s\leq r'+h$.  By (\ref{eq-increasinginr}),  
$u({s},{r}'+h,\theta)$ is increasing in $s$ and bounded from above  by $u(r'+h,r'+h,\theta)=\theta$. Hence we obtain 
\bex
\mbf{E}_{\delta_{x}}[e^{-\theta ||X_{D_{r'+h}}||}] = E_{s}[e^{-\theta Z_{r'+h}}] = e^{-u({s},{r}'+h,\theta)} \geq e^{-\theta},
\ex
 which  is  (\ref{eq-inequality(sub)critical4}). This means we have proved (\ref{eq-inequality(sub)critical2}) and thus $u({r}, {r}+h, \theta) \geq u({r}',{r}'+h,\theta)$.
The latter yields 
that,  for all  $\theta\geq 0$,
\be\label{eq-monotonederiv}
\frac{\partial}{\partial {s}} u({r},{s},\theta)|_{{s}=r}  &=& \lim_{h\downarrow 0} \frac{u({r}, {r}+h, \theta)-u(r,r,\theta)}{h} \nonumber\\ 
&\geq& \lim_{h\downarrow 0 }\frac{u({r}',{r}'+h,\theta)-u(r',r',\theta)}{h} = \frac{\partial}{\partial {s}} u({r}',{s},\theta)|_{{s}=r'}. \nonumber \\
\e
Now we apply (\ref{eq-laplacefunctionalint}) to get
 \be\label{eq-laplacebranchingmech}
\frac{\partial}{\partial {s}} u({r},{s},\theta)|_{{s}=r} = \Big(-\Psi({s},\theta) \frac{\partial}{\partial \theta} u({r},{s},\theta)\Big)|_{{s}=r}= - \Psi(r,\theta) \cdot 1,
\e
where we used that $\lim_{{s}\downarrow r} \frac{\p}{\p\theta} u({r},{s},\theta)  = 1$ which can be seen as  follows. By dominated convergence, we have
\bex
\lim_{{s}\downarrow r} \frac{\p}{\p \theta} e^{-u({r},{s},\theta)} = \lim_{{s}\downarrow r} \frac{\p}{\p \theta} E_r[e^{-\theta Z_{s}} \mbf{1}_{\{Z_{s}<\infty\}}] = 
\lim_{{s}\downarrow r}  E_r[- Z_{s} e^{-\theta Z_{s}} \mbf{1}_{\{Z_{s}<\infty\}}] = - e^{-\theta}.
\ex
On the other hand,  
\bex
\lim_{{s}\downarrow r} \frac{\p}{\p \theta} e^{-u({r},{s},\theta)} = - \lim_{{s}\downarrow r} \frac{\p} {\p \theta} u({r},{s},\theta) \ e^{-u({r},{s},\theta)} = - \lim_{{s}\downarrow r} \frac{\p} {\p \theta} u({r},{s},\theta) \ e^{-\theta} 
\ex
 and we may conclude  that $\lim_{{s}\downarrow r} \frac{\p} {\p \theta} u({r},{s},\theta) = 1$ as claimed.\\
 Combining (\ref{eq-monotonederiv}) with (\ref{eq-laplacebranchingmech}) gives
$
\Psi(r,\theta) \leq \Psi(r',\theta)$ { for } $\theta \geq 0 
$ and $r\leq r'$, which completes the proof.

(ii) Define $\Psi^*(r,\theta):=\Psi(r,\lambda^*+\theta)$ for $\theta\geq 0$. Then $(\Psi^*(r,\cdot), r>0)$ is a family of subcritical branching mechanisms 
which, by part (i), has the property that $\Psi^*(r,\theta) \leq \Psi^*(r',\theta)$ for $r\leq r'$ and all $\theta \geq 0$. Clearly this gives  $\Psi(r,\theta) \leq \Psi(r',\theta)$ for $r\leq r'$ and $\theta \geq \lambda^*$. \\
Let $\theta\leq \lambda^*$. First, note that $u(r,s,\lambda^*)= - \log E_r[e^{-\lambda^* Z_s}] = \lambda^*$, which is a consequence of Lemma \ref{lem-martingales} (ii). Thus, $u(r,s,\theta) \leq u(r,s,\lambda^*)=\lambda^*$ for all $\theta\leq \lambda^*$,  $0<r\leq s$, and in particular  $\psi(u(r,s,\theta))\leq 0$. We therefore get 
\bex
u({r},{s},\theta) 
&=&  \theta - \mbb{E}_{r}^{\text{R}} \int_0^{\tau_{z}} \psi(u(R_{v},{s},\theta)) \ {\rm d}v  - \mbb{E}_{z}^{\text{R}}\int_0^{\tau_{s}} \psi(u(R_{v},{s},\theta)) \ {\rm d}v \nonumber \\
&\geq & \theta - \mbb{E}_{z}^{\text{R}}\int_0^{\tau_{s}} \psi(u(R_{v},{s},\theta)) \ {\rm d}v \nonumber \\
&=& u({z},{s},\theta)
\ex
for any $0<r\leq z \leq s$, $\theta\leq \lambda^*$.
We can then use  $\frac{\p}{\p r} u({r},{s},\theta) \leq 0 $ in place of the inequality (\ref{eq-increasinginr}) in the proof of part (i). 
Thus, following the same arguments as in the proof of part (i) with all inequalities reversed, we  see that  $\Psi(r,\theta) \geq \Psi(r',\theta)$ for $r\leq r'$ and all $\theta \leq \lambda^*$.
\end{proof}

\subsection{Limiting branching mechanism - Proof of Theorem \ref{theo-branchingmechlimit}}
To begin with, we show the existence and finiteness of the limiting branching mechanism $\Psi_\infty$ and derive a PDE characterisation. 
\begin{prop}\label{prop-pdeforpsiinfinity}
For each $\theta\geq 0$, the limit $\lim_{r\uparrow \infty}\Psi(r,\theta)={\Psi}_\infty(\theta)$  is finite and the convergence holds uniformly in $\theta$ on any bounded, closed subset of $\mbb{R}_+$. 
\\
(i) In the (sub)critical case, 
${\Psi}_\infty$ 
satisfies the equation
\be\label{eq-limitbranchingmech}
\frac{1}{2} \frac{\partial}{\partial \theta} {\Psi}_\infty^2(\theta) &=& 2 \psi(\theta), \ \ \  \theta\geq 0, 
\\
{\Psi}_\infty(0) &=& 0.\nonumber
\e
(ii) In the supercritical case, ${\Psi}_\infty$ satisfies (\ref{eq-limitbranchingmech})  with the initial condition at $0$ replaced by
\bex
{\Psi}_\infty(0) &=& - 2 \sqrt{\int_0^{\lambda^*}|\psi(\theta)| \ {\rm d}\theta} 
\ex
and ${\Psi}_\infty(\lambda^*) = 0$.  
\end{prop}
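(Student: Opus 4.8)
The plan is to obtain $\Psi_\infty$ as a monotone limit, to establish its finiteness and the claimed ODE by integrating the PDE (\ref{eq-pderelationforpsi}) in $\theta$ and then letting $r\to\infty$, and finally to upgrade pointwise convergence to uniform convergence via Dini's theorem.

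\emph{Existence of the limit.} For fixed $\theta\geq 0$, Proposition \ref{prop-changeshape} shows that $r\mapsto\Psi(r,\theta)$ is monotone: non-decreasing for every $\theta$ in the (sub)critical case, and, in the supercritical case, non-increasing on $[0,\lambda^*]$ and non-decreasing on $[\lambda^*,\infty)$. Hence $\Psi_\infty(\theta):=\lim_{r\to\infty}\Psi(r,\theta)$ exists in $[-\infty,+\infty]$. Since each $\Psi(r,\cdot)$ is convex, so is $\Psi_\infty$; by Lemma \ref{lem-rootlambdastar} we have $\Psi_\infty(\lambda^*)=0$, $\Psi_\infty\geq 0$ on $[\lambda^*,\infty)$ and $\Psi_\infty\leq 0$ on $[0,\lambda^*]$.

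\emph{Finiteness via the integrated PDE.} Fix $\vartheta>\lambda^*$ and integrate (\ref{eq-pderelationforpsi}) over $\theta\in[\lambda^*,\vartheta]$. Using $\Psi(r,\lambda^*)=0$, so that $\int_{\lambda^*}^\vartheta\frac{\p}{\p\theta}\Psi^2(r,\theta)\,{\rm d}\theta=\Psi^2(r,\vartheta)$, and differentiating $I_\vartheta(r):=\int_{\lambda^*}^\vartheta\Psi(r,\theta)\,{\rm d}\theta$ under the integral sign (legitimate by the regularity of $\Psi$ already exploited in Lemma \ref{lem-branchingmechpdepre}), one obtains
\[
I_\vartheta'(r)+\tfrac{1}{2}\Psi^2(r,\vartheta)+\frac{d-1}{r}I_\vartheta(r)=2\int_{\lambda^*}^\vartheta\psi(\lambda)\,{\rm d}\lambda .
\]
On $[\lambda^*,\infty)$ we have $\Psi(r,\cdot)\geq 0$ (Lemma \ref{lem-rootlambdastar}) and $\Psi(r,\theta)$ non-decreasing in $r$ (Proposition \ref{prop-changeshape}), so $I_\vartheta(r)\geq 0$ and $I_\vartheta'(r)\geq 0$; discarding these non-negative terms gives $\Psi^2(r,\vartheta)\leq 4\int_{\lambda^*}^\vartheta\psi$, uniformly in $r$, whence $\Psi_\infty(\vartheta)$ is finite. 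In the supercritical case, for $\vartheta<\lambda^*$ the same computation on $[\vartheta,\lambda^*]$ --- now with $J_\vartheta(r):=\int_\vartheta^{\lambda^*}\Psi(r,\theta)\,{\rm d}\theta\leq 0$, $J_\vartheta'(r)\leq 0$ and $\int_\vartheta^{\lambda^*}\psi\leq 0$ --- yields $\Psi^2(r,\vartheta)\leq-4\int_\vartheta^{\lambda^*}\psi$, again uniform in $r$, so $\Psi_\infty(\vartheta)$ is finite there too.

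\emph{Passing to the limit and uniformity.} For $\vartheta>\lambda^*$, $I_\vartheta(r)$ is non-decreasing and bounded, hence converges to a finite limit, so $\frac{d-1}{r}I_\vartheta(r)\to 0$; solving for $I_\vartheta'(r)$ in the displayed identity and using $\Psi(r,\vartheta)\to\Psi_\infty(\vartheta)$ shows that $I_\vartheta'(r)$ converges, and a function tending to a finite limit whose derivative also converges has derivative tending to $0$. Passing to the limit therefore gives $\Psi_\infty^2(\vartheta)=4\int_{\lambda^*}^\vartheta\psi(\lambda)\,{\rm d}\lambda$ for $\vartheta\geq\lambda^*$, and differentiating in $\vartheta$ yields $\tfrac{1}{2}\frac{\p}{\p\theta}\Psi_\infty^2(\theta)=2\psi(\theta)$ on $(\lambda^*,\infty)$; recall $\Psi_\infty(\lambda^*)=0$. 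In the supercritical case the analogous argument on $[\vartheta,\lambda^*]$ gives $\Psi_\infty^2(\vartheta)=-4\int_\vartheta^{\lambda^*}\psi$, hence the same ODE on $(0,\lambda^*)$ and, at $\vartheta=0$, $\Psi_\infty(0)=-2\sqrt{\int_0^{\lambda^*}|\psi(\theta)|\,{\rm d}\theta}$; in the (sub)critical case $\lambda^*=0$ and the first computation covers all $\theta>0$ with $\Psi_\infty(0)=0$. The resulting $\Psi_\infty$ is thus given by an explicit continuous formula, each $\Psi(r,\cdot)$ is continuous on $[0,K]$ (the integral term in (\ref{eq-Psi}) being continuous at $0$ by dominated convergence, using $\int(1\wedge x^2)\Lambda_r<\infty$), and by the monotonicity established above the convergence $\Psi(r,\cdot)\to\Psi_\infty$ is monotone on each of $[0,\lambda^*]$ and $[\lambda^*,K]$; Dini's theorem then gives uniform convergence on each, hence on $[0,K]$.

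\emph{Main obstacle.} The delicate points are the regularity justifying differentiation under the integral sign (and, more basically, that $\Psi$ is $C^1$ in $r$, already implicit in Theorem \ref{theo-branchingmechpde}(iii)), together with the careful sign bookkeeping forced by the two regimes of Proposition \ref{prop-changeshape}, so that the correct non-negative (resp.\ non-positive) terms are discarded in each regime.
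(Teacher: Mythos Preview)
Your argument is correct and takes a genuinely different route from the paper's.

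The paper works pointwise with the PDE (\ref{eq-pderelationforpsi}): it first shows finiteness of $\Psi_\infty(\theta_0)$ by a contradiction argument based on the inequality $\Psi(r,\theta)\frac{\partial}{\partial\theta}\Psi(r,\theta)\le 2\psi(\theta)$ together with convexity, and then passes to the limit $r\to\infty$ directly in (\ref{eq-pderelationforpsi}). The delicate step there is the convergence of $\frac{\partial}{\partial\theta}\Psi(r,\theta)$, handled via Rockafellar's results on sequences of convex functions; this yields the ODE (\ref{eq-limitbranchingmech}) first for all but countably many $\theta$, after which a continuity argument upgrades it to all $\theta$. Uniform convergence is also deduced from convexity (Rockafellar, Theorem~10.8).

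Your approach integrates the PDE in $\theta$ over $[\lambda^*,\vartheta]$ (or $[\vartheta,\lambda^*]$), which, thanks to $\Psi(r,\lambda^*)=0$, converts the awkward term $\tfrac12\frac{\partial}{\partial\theta}\Psi^2$ into $\tfrac12\Psi^2(r,\vartheta)$ itself. The sign information from Proposition~\ref{prop-changeshape} and Lemma~\ref{lem-rootlambdastar} then lets you drop the $I_\vartheta'$ and $\frac{d-1}{r}I_\vartheta$ terms to obtain the uniform bound $\Psi^2(r,\vartheta)\le 4\big|\int_{\lambda^*}^\vartheta\psi\big|$ in one stroke, and a second pass (now knowing $I_\vartheta(r)$ is bounded monotone, hence convergent, so its derivative must tend to $0$) gives the closed form $\Psi_\infty^2(\vartheta)=4\int_{\lambda^*}^\vartheta\psi$ directly. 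This sidesteps both the Rockafellar machinery for derivative convergence and the ``a.e.\ then everywhere'' patching the paper needs. Your use of Dini's theorem for uniform convergence is likewise a clean alternative to the convexity route. The price you pay is the interchange $\frac{d}{dr}\int_{\lambda^*}^\vartheta\Psi(r,\theta)\,{\rm d}\theta=\int_{\lambda^*}^\vartheta\frac{\partial}{\partial r}\Psi(r,\theta)\,{\rm d}\theta$, which you flag; given that the paper already takes the PDE (\ref{eq-pderelationforpsi}) as established, the required regularity in $r$ is available and this step is unproblematic.
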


\begin{proof}
 From the monotonicity in Proposition \ref{prop-changeshape}, we conclude that the pointwise limit ${\Psi}_\infty(\theta):= \lim_{r\uparrow \infty}\Psi(r,\theta)$ exists. We will have to show that  $|{\Psi}_\infty(\theta)|$ is finite for each $\theta\geq 0$. Uniform  convergence on any bounded, closed subset of $\mbb{R}$ will then follow by convexity, see for example Theorem 10.8 in \cite{rockafellarbookconvex97}.  We consider the (sub)critical case and the supercritical case separately.
 
 (i) Suppose we are in the (sub)critical case. We have $\Psi(r,0)=0$ for all $r>0$ and hence ${\Psi}_\infty(0)=0$.  For $\theta>0$, recall  the PDE (\ref{eq-pderelationforpsi}), which can be written slightly differently as
\be\label{eq-pderelationforpsi2}
\frac{\partial}{\partial r} \Psi(r,\theta) + \Psi(r,\theta)\frac{\partial}{\partial \theta} \Psi(r,\theta) + \frac{d-1}{r} \Psi(r,\theta) &=& 2 \psi(\theta), \ \ r>0, \theta> 0. \nonumber\\
\e
 By Proposition \ref{prop-changeshape} (i), $\frac{\p}{\p r} \Psi(r,\theta)\geq 0$ and, by Lemma \ref{lem-rootlambdastar}(i), $\Psi(r,\theta)\geq 0$. Thus, 
\be\label{eq-inequalityforpsi}
\Psi(r,\theta)\frac{\partial}{\partial \theta} \Psi(r,\theta) &\leq& 2 \psi(\theta), \ \ \ \text{ for all } r>0 \text{ and } \theta\geq 0. 
\e 
 Fix a $\theta_0> 0$. Suppose for contradiction that $\Psi(r,\theta_0)\uparrow \infty$ as $r\to\infty$. For any $K>0$, we can find an $r_0$ large enough such that \be\label{eq-assumptionunbounded}
\Psi(r_0,\theta_0)&>& 2 K \psi(\theta_0).
\e
By (\ref{eq-inequalityforpsi}), this implies that
$\frac{\p}{\p \theta} \Psi(r_0,\theta_0) < \frac{1}{K}$. 
As $\Psi$ is convex in $\theta$ with $\Psi(r_0,0)=0$, we get that
\bex
\Psi(r_0,\theta_0) &\leq& \frac{\theta_0}{K}.
\ex
Now we can choose $K$ large enough such that $\theta_0/K < 2 K \psi(\theta_0)$, which then contradicts (\ref{eq-assumptionunbounded}). 
Hence, $\lim_{r\to\infty}\Psi(r,\theta)=\Psi_\infty(\theta)<\infty$ for all $\theta\geq 0$. \\
Note that $\limsup_{r\to\infty} \frac{\p}{\p\theta} \Psi(r,\theta)$ is also finite for each $\theta\geq 0$.  Indeed, if we supposed the contrary for some $\theta > 0$, that is,  $\limsup_{r\to\infty}\frac{\partial}{\partial \theta} \Psi(r,\theta) = \infty$, then (\ref{eq-inequalityforpsi}) would imply that $\liminf_{r\to\infty}\Psi(r,\theta)= 0$, which contradicts Lemma \ref{lem-rootlambdastar} (i). By convexity, we can pick any $\theta > 0$ to get $\limsup_{r\to\infty} \frac{\partial}{\partial \theta}  \Psi(r,0+)  \leq \limsup_{r\to\infty} \frac{\partial}{\partial \theta} \Psi(r,\theta)<\infty$.
\\
Next, we want to take  $r\to\infty$ in (\ref{eq-pderelationforpsi2}) and we know that the limit of the left-hand side exists since the right-hand side does  not depend on $r$. We keep $\theta_0>0$ fixed and  
 consider each term on the left-hand side of (\ref{eq-pderelationforpsi2}) separately.\\
We have just seen that $\lim_{r\to\infty}\Psi(r,\theta_0)<\infty$ which implies that the third term on the left-hand side of  (\ref{eq-pderelationforpsi2}), namely $\frac{d-1}{r} \Psi(r,\theta_0)$, vanishes as $r\to\infty$. \\
Consider the term $\Psi(r,\theta_0) \frac{\partial}{\partial \theta} \Psi(r,\theta_0)$ next. Since $\Psi(r,\cdot)$ is a sequence of continuous, convex functions, the pointwise limit ${\Psi}_\infty$ is also continuous and convex in $\theta$, cf. Theorem 10.8 in Rockafellar \cite{rockafellarbookconvex97}.
The convexity  ensures that the set of points at which ${\Psi}_\infty$ is not differentiable is at most countable. If $\Psi_\infty$ is differentiable at $\theta_0$, then by Theorem 25.7 in \cite{rockafellarbookconvex97}, it follows that  $\lim_{r\to\infty} \frac{\p}{\p\theta} \Psi(r,\theta_0) = \frac{\p}{\p\theta} \Psi_\infty(\theta_0)$ 
and hence 
\be\label{eq-limitafterswap}
\lim_{r\to\infty}  \Psi(r,\theta_0)\frac{\partial}{\partial \theta} \Psi(r,\theta_0) = {\Psi}_\infty(\theta_0) \frac{\partial}{\partial \theta} {\Psi}_\infty(\theta_0).
\e
So far we have seen that, for all $\theta\geq 0$ at which $\Psi_\infty$ is differentiable, the second and third term on the left-hand side of (\ref{eq-pderelationforpsi2}) converge to a finite limit as $r\to\infty$ which implies that the limit of the first term, that is $\lim_{r\to\infty}\frac{\p}{\p r}\Psi(r,\theta)$, also exists and is finite. With  $\lim_{r\to\infty}\Psi(r,\theta)<\infty$ it thus follows that $\frac{\p}{\p r}\Psi(r,\theta)$ tends to $0$ as $r\to\infty$, for all $\theta\geq 0$ at which $\Psi_\infty$ is differentiable.\\
In conclusion, for any  $\theta$ at which $\Psi_\infty$ is differentiable,  the first and third term on the left-hand side of (\ref{eq-pderelationforpsi2}) vanish as $r\to\infty$ and with (\ref{eq-limitafterswap}) we get
\be\label{eq-limitforallmostall}
\Psi_\infty(\theta) \frac{\partial}{\partial \theta} \Psi_\infty(\theta) = 2 \psi(\theta).  
\e
For $\theta>0$, we have $\Psi_\infty(\theta)>0$ and we can write (\ref{eq-limitforallmostall}) as
\be\label{eq-limitforallmostall2}
 \frac{\partial}{\partial \theta} \Psi_\infty(\theta) = 2 \frac{\psi(\theta)}{\Psi_\infty(\theta)},
\e
which again holds for all $\theta>0$ at which  $\Psi_\infty$ is differentiable. By convexity,  $\Psi_\infty$ admits left and  right derivatives for every $\theta>0$. Since the right-hand side of (\ref{eq-limitforallmostall2}) is continuous and (\ref{eq-limitforallmostall2}) holds true for all but countably many $\theta>0$, we conclude that the  left and the right  derivative of $\Psi_\infty(\theta)$ agree for every $\theta > 0$. Thus (\ref{eq-limitforallmostall2}), and equivalently   (\ref{eq-limitbranchingmech}), holds in fact for every $\theta >0$.
By convexity, for any $\theta>0$, we get  
\bex
\frac{\partial}{\partial \theta} \Psi_\infty(0+)\leq  \frac{\partial}{\partial \theta} \Psi_\infty(\theta) = 2 \frac{\psi(\theta)}{\Psi_\infty(\theta)} <\infty,
\ex
which shows that (\ref{eq-limitbranchingmech}) holds true for $\theta = 0$ with both sides being equal to $0$.

(ii) We consider the supercritical case now. Again we first have to show that ${\Psi}_\infty(\theta)$ is finite for each $\theta\geq 0$. \\
Let us begin with the case $\theta\in[\lambda^*,\infty)$.  We can consider the (sub)critical branching mechanisms $\Psi^*(r,\lambda) := \Psi(r,\lambda+\lambda^*)$ for $\lambda\geq 0$. 
Then  part (i)  applies to the  (sub)critical $\Psi^{*}$ and we conclude that, for any $\theta\geq \lambda^*$,
\bex
{\Psi}_\infty(\theta) = \lim_{r\to\infty} \Psi(r,\theta) = \lim_{r\to\infty} \Psi^{*}(r,\theta-\lambda^*) = {\Psi}_\infty^{*}(\theta-\lambda^*)<\infty. 
\ex
In particular, the equation (\ref{eq-limitbranchingmech}) holds for all $\theta\geq\lambda^*$ and ${\Psi}_\infty(\lambda^*)={\Psi}_\infty^*(0)=0$.\\
Further, it follows from the monotonicity in Proposition \ref{prop-changeshape} that $\frac{\p}{\p \theta} \Psi^*(r,0+)\leq \frac{\p}{\p \theta} {\Psi}_\infty^*(0+)$. The latter derivative was shown to be finite in the proof of part (i).
Thus, for any $r>0$, 
\bex
\frac{\p}{\p\theta}\Psi(r,\theta)|_{\theta =\lambda^*} = \frac{\p}{\p \theta} \Psi^*(r,0+)\leq \frac{\p}{\p \theta} {\Psi}_\infty^*(0+)<\infty.
\ex
Hence, we have a uniform upper bound for the $\theta$-derivative of $\Psi(r,\cdot)$ at $\lambda^*$. Recalling that $\Psi(r,\lambda^*)=0$,
 convexity ensures that $\Psi(r,\cdot)$ is uniformly bounded from below by the function $\frac{\p}{\p \theta} {\Psi}_\infty^*(0+)(\cdot - \lambda^*)$. 
This implies already that  $\lim_{r\to\infty}|\Psi(r,\theta)|<\infty$ for all $\theta\in[0,\lambda^*]$. \\
To show that the equation (\ref{eq-limitbranchingmech}) holds for all $\theta\leq\lambda^*$ we can now simply repeat the argument given in the proof of part (i).
Finally, with ${\Psi}_\infty(\lambda^*)=0$, we can derive the initial condition for ${\Psi}_\infty(0)$ by integrating (\ref{eq-limitbranchingmech}) from $0$ to $\lambda^*$. 
\end{proof}

\begin{proof}[Proof of Theorem \ref{theo-branchingmechlimit}]
 Proposition \ref{prop-pdeforpsiinfinity} guarantees the existence and finiteness of $\Psi_\infty$. 
 If we integrate (\ref{eq-limitbranchingmech}) from $\lambda^*$ to $\theta$, and note that $\Psi_\infty(\theta)$ and $\psi(\theta)$ are negative if and only if $\theta\leq \lambda^*$, we obtain the  expression in (\ref{eq-psibartheta}). 
 It thus remains to show (ii). 

It follows from an obvious adaptation of the proof of Theorem 3.1 in Kyprianou et al. \cite{kyprianouetalsbmtw12} that $Z^\infty$ is the process of the total mass  of the branching Markov exit measures of a one-dimensional super-Brownian as it first exits the family of intervals $((-\infty, {s}), {s}\geq 0)$ as claimed. \\
Concerning the convergence in (\ref{eq-convergencetozinfty}), we will show that, for $s\geq 0$ and $\theta\geq 0$,  $u^\infty({s},\theta):=\lim_{r\to\infty} u({r},{s}+{r},\theta)$ exists and solves
\be\label{eq-semigrouplimit}
u^\infty({s},\theta) = \theta - \int_0^{s} {\Psi}_\infty(u^\infty({s}-{v},\theta)) \ {\rm d}{v},
\e
which is the characterising  equation for the Laplace functional of ${Z}^\infty$. \\
This is trivially satisfied for ${s}=0$. Henceforth, let  ${s}>0$ and $\theta\geq 0$ be fixed. 
Recall that $u({r},{s}+{r},\theta)$ solves equation  (\ref{eq-inhomlaplacefunctional}), which can be written as
\bex %\label{eq-inhomlaplacefunctional2}
u({r},{s}+{r},\theta) = \theta - \int_0^{s} \Psi({v}+r,u({v}+{r},{s}+{r},\theta)) \ {\rm d}{v}, \ \ \,  r> 0.
\ex
Note that the convergence of the convex functions $\Psi(r,\cdot)$ to $\Psi_\infty(\cdot)$ in Theorem \ref{theo-branchingmechlimit} holds uniformly in $\theta$ on each bounded closed subset of $\mbb{R}_+$. 
Therefore, for fixed $\epsilon>0$, we can 
choose  $r$ large enough such that 
$|\Psi({s}+{r},\lambda)-{\Psi}_\infty(\lambda)|<\epsilon$ for all $\lambda\in {\{u({v}+{r},{s}+{r},\theta), 0\leq {v}\leq {s}\}}$. 
Thus, for large $r$,
\be\label{eq-convergenceofu}
\lefteqn{ \Big|u({r},{s}+{r},\theta)- \big(\theta- \int_0^{s} {\Psi}_\infty(u({v}+{r},{s}+{r},\theta))\ {\rm d}{v} \big)\Big| } && \nonumber\\
&=& \Big|\int_0^{s}\Psi({v}+r,u({v}+{r},{s}+{r},\theta)) \ {\rm d}{v} -  \int_0^{s} {\Psi}_\infty(u({v}+{r},{s}+{r},\theta))\ {\rm d}v \Big| \nonumber\\ 
&\leq& \epsilon \ {s}.
\e
Now assume for  contradiction that $\limsup_{{r}\to\infty} u({r},{s}+{r},\theta) = +\infty$. Since $\Psi_\infty$ is convex and $\Psi'_\infty(0+)\geq 0$ (with $\Psi'_\infty(0+)=0$ in the supercritical case), the integrand in the first line of (\ref{eq-convergenceofu}) is bounded from below by $\Psi_{\infty}(0)$. Therefore, the expression in the first line of (\ref{eq-convergenceofu}) tends to $\infty$ along a subsequence of $r$ which is an obvious contradiction.\\
Hence, $u({r},{s}+{r},\theta)$ is  bounded as a sequence in $r$. It  therefore contains a convergent subsequence, say $u({r}_n,{s}+{r}_n,\theta)$ where $(r_n,n\geq 1)$ is a strictly monotone sequence which tends to $\infty$.  
Let us show that every subsequence converges to the same limit. 
Let $(r'_n, n\geq 1)$ be another strictly monotone sequence which tends to $\infty$. Set $u_{\sup}({v}) := \sup_{n\in\mbb{N}}\{u({r}_n,{v}+{r}_n,\theta)\}$ and  $u'_{\sup}({v}) := \sup_{n\in\mbb{N}}\{u({r}'_n,{v}+{r}'_n,\theta)\}$ and note that  $u_{\sup}({v})$, $u'_{\sup}({v}) <\infty$.
By (\ref{eq-convergenceofu}), for any $\epsilon>0$, we can find an $N\in\mbb{N}$ large enough such that for all $n\geq N$
\be\label{eq-estimateforsubsequences}
\lefteqn{ |u({r}_n,{s}+{r}_n,\theta) - u({r}'_n,{s}+{r}'_n,\theta) | } &&\nonumber\\
&\leq& 2\epsilon + \int_0^{s} \Big|\Psi_\infty(u({r}_n,{v}+{r}_n,\theta)) - {\Psi}_\infty(u({r}'_n,{v}+{r}'_n,\theta))\Big| \ {\rm d}{v}  \nonumber\\
&\leq&  2\epsilon +\int_0^{s} M({v}) |u({r}_n,{v}+{r}_n,\theta) - u({r}'_n,{v}+{r}'_n,\theta)|\ {\rm d}{v},
\e
where $M({v}):= \sup\{\Psi'_\infty(w) : w\in (0,   u_{\sup}({v}) \vee u'_{\sup}({v}) )\}<\infty$. We can bound the integral further by setting $M:=\sup_{0\leq {v} \leq s} M({v})<\infty$. Set \bex
F_n({s}') = M \int_0^{{s}'} |u({r}_n,{v}+{r}_n,\theta) - u({r}'_n,{v}+{r}'_n,\theta)|\ {\rm d}{v}, \ \ \text{ for } \ 0\leq {s}' \leq {s}, 
\ex 
 and note that $\p F_n({s}')/\p {s}' =  M |u({r}_n,{s}'+{r}_n,\theta) - u({r}'_n,{s}'+{r}'_n,\theta)|$. By (\ref{eq-estimateforsubsequences}),
\bex
\frac{\p}{\p {s}'} F_n({s}') - 2 \epsilon M - M F_n({s}') \leq 0. 
\ex  
Multiplying by $e^{-M{s}'}$, we derive
$
\p [(F_n({s}')+2\epsilon) e^{-M{s}'}]/\p s' \leq 0.
$
Therefore,  
\bex
(F_n({s}')+2\epsilon) e^{-M{s}'}  \leq  F_n({0})+2\epsilon = 2\epsilon, \ \ \text{ for any } \ 0\leq s' \leq s.
\ex
Hence, $F_n({s}') \leq 2\epsilon (e^{M{s}}-1)  $, for  $0\leq s' \leq s$.
Since $\epsilon>0$ can be chosen arbitrarily small, we  conclude from the definition of $F_n(s')$ that $u({r}'_n,{s}'+{r}'_n,\theta)$ converges to the same limit as $u({r}_n,{s}'+{r}_n,\theta)$ as $n\to\infty$.  
We have thus shown that, considered as a sequence in $r$, all subsequences of $u({r},{s}+{r},\theta)$ converge to the same limit. Therefore  $u^\infty({s},\theta) =\lim_{r\to\infty} u({r},{s}+{r},\theta)$ exists and, with (\ref{eq-convergenceofu}),  it satisfies  (\ref{eq-semigrouplimit}). 
By uniqueness of solutions to (\ref{eq-semigrouplimit}),   $u^\infty({s},\theta)$ agrees with the Laplace functional associated with ${Z}^\infty$ which in turn implies the desired convergence.
\end{proof}

\appendix
\section{Derivation of the differential equation (\ref{eq-pdebessel1}) corresponding to the semi-group equation (\ref{eq-semigroupbessel})}\label{app-ode} 
The reader familiar with the superprocess literature will readily believe that any solution to  the differential equation (\ref{eq-pdebessel1}) also solves the semi-group equation (\ref{eq-semigroupbessel}) and conversely that solutions to (\ref{eq-semigroupbessel}) also solve (\ref{eq-pdebessel1}). Results of this fashion can be found for instance in the work of Dynkin, see \cite{dynkin1991probabilistic},  Section 3 in \cite{dynkinsuperprocpde93} or  Section 5.2 in   \cite{dynkindiffsuperdiffpde02}. 
However, in these references  only (sub)critical branching mechanism are allowed and the authors are unaware of a rigorous proof in the literature for the case of a supercritical branching mechanism.
Although it seems possible to adapt Dynkin's arguments  to the supercritical case, we will offer a self-contained proof here instead. \\
\\
Recall that the Laplace functional $u$ of $Z$, defined in (\ref{eq-laplacedef}), is the unique non-negative solution to the equation  
\be\label{eq-semigroupbessel2}
u({r},{s},\theta) = \theta - \mbb{E}_r^{\text{R}} \int_0^{\tau_{s}} \psi(u (R_{l},{s},\theta)) \ {\rm d}{l}, \ 0<r\leq {s}, \ \theta\geq 0,
\e
where $(R,\mbb{P}^\text{R})$ is a $d$-dimensional Bessel process and $\tau_{s}:=\inf\{{l}>0: R_{l}>{s}\}$ its first passage time above level ${s}$, see (\ref{eq-semigroupbessel}).\\
Fix $0<r\leq {s}$ and $\theta\geq 0$ from now on.
Let us apply a Lamperti transform to the $d$-Bessel process $R$ in the integral on the right-hand side of (\ref{eq-semigroupbessel2}).  
Define $\varphi({s})=\int_0^{r^2 {s}} R_{l}^{-2} {\rm d}{l}$, ${s}\geq 0$, then
\bex
B_{s} = \log (r^{-1} R_{r^2 \varphi^{-1}({s})}), \ {s}\geq 0,
\ex
is a one-dimensional Brownian motion with drift $\frac{d}{2}-1$ starting from $0$. Let us denote the law of $B$ by $\mbb{P}_0$. Thus we get
\bex
\mbb{E}_r^{\text{R}} \int_0^{\tau_{s}} \psi(u (R_{l},{s},\theta)) \ {\rm d}{l} &=& \mbb{E}_r^{\text{R}} \int_0^{\varphi(r^{-2}\tau_{s})} \psi(u(R_{r^2\varphi^{-1}({l})},{s},\theta)) R_{r^2\varphi^{-1}({l})}^2\ {\rm d}{l} \\
&=& {\mbb{E}}_0 \int_0^{T_{ \log({s}/r) }} \psi(u(e^{B_{l}+\log r},{s},\theta))e^{2 (B_{l}+\log r)}\ {\rm d}{l} \\
&=& \mbb{E}_{\log r} \int_0^{T_{\log {s} }} \psi(u (e^{B_{l}},{s},\theta))e^{2 B_{l}}\ {\rm d}{l}, 
\ex
where $T_{\log {s}}$ is the first time $B$ crosses level $\log {s}$. 
Equation  (\ref{eq-semigroupbessel2}) becomes
\be\label{eq-semigroupbrownianmotion}
u({r},{s},\theta)
&=& \theta - \mbb{E}_{\log r} \int_0^{T_{ \log {s} }} \psi(u(e^{B_{l}},{s},\theta))e^{2 B_{l}}\ {\rm d}{l}.
\e
 We split the integral on the right hand side into its excursions away from the maximum. This gives
\bex
&&  \mbb{E}_{\log r} \int_0^{T_{ \log {s} }} \psi(u(e^{B_{l}},{s},\theta))e^{2 B_{l}}\ {\rm d}{l} \\
   && \qquad \qquad = \mbb{E}_{\log r} \sum_{\log r\leq u \leq \log {s}} \int_0^{\zeta^{(u)}} \psi(u(e^{u-e_u({l})},{s},\theta)) e^{2(u-e_u({l}))} \ {\rm d}{l},
\ex
where $e_u$ is an excursion away from the maximum with lifetime $\zeta^{(u)}$ and the sum is taken over all left end-points $u$ of the excursion intervals in $(T_{\log r}, T_{\log {s}})$. It follows from the Compensation formula for excursions (Bertoin \cite{bertoin1998booklevy},  Cor. 11, p.110) that 
\bex
&& \mbb{E}_{\log r} \sum_{\log r\leq u \leq \log {s}} \int_0^{\zeta^{(u)}} \psi(u(e^{u-e_u({l})},{s},\theta)) e^{2(u-e_u({l}))} \ {\rm d}{l} \\&& \qquad \qquad \qquad = \int_{\log r}^{\log {s}} \eta \left( \int_0^\zeta \psi(u( e^{u-e({l})},{s},\theta) ) e^{2(u-e({l}))} \ {\rm d}{l}\right) \ {\rm d}u,
\ex
where $\eta$ denotes the excursion measure and $e$ is a generic excursion with length $\zeta$. Then we apply Exercise 5, chapter VI, \cite{bertoin1998booklevy}, to get
\bex
&& \int_{\log r}^{\log {s}} \eta \left( \int_0^\zeta \psi(u( e^{u-e({s})},{s},\theta) ) e^{2(u-e({l}))} \ {\rm d}{l}\right) \ {\rm d}u \\
&& \qquad \qquad = \int_{\log r}^{\log {s}} \int_0^\infty \psi(u( e^{u-y},{s},\theta )) e^{2(u-y)}  \ \hat{V}({\rm d}y) \ {\rm d}u,
\ex
where $\hat{V}$ is the renewal function of the dual ladder height process (the dual process is here simply Brownian motion with drift $-(\frac{d}{2}-1)$). 
We see from equation (4), p. 196 in \cite{bertoin1998booklevy} that $\hat{V}({\rm d}y)= 2 e^{-2 (\frac{d}{2}-1) y} {\rm d}y$ and obtain
\bex
\lefteqn{ \int_{\log r}^{\log {s}} \int_0^\infty \psi(u ( e^{u-y},{s},\theta ) ) e^{2(u-y)}  \ \hat{V}({\rm d}y) \ {\rm d}u } && \\
& = & 2  \ \int_{\log r}^{\log {s}} e^{2 u}\int_0^\infty \psi(u (e^{u-y},{s},\theta )) \ e^{-d y}\ {\rm d}y\ {\rm d}u \\
& \overset{z=e^{u-y}}{=}& - 2 \ \int_{\log r}^{\log {s}} e^{2 u}\int_{e^{u}}^{0} \psi(u (z,{s},\theta )) z^d e^{-du} \ z^{-1} \ {\rm d}z\ {\rm d}u \\
& \overset{v=e^u}{=}& -2 \ \int_{ r}^{ {s}} v^{2}\int_{v}^0 \psi(u (z,{s},\theta )) \ z^{d-1} v^{-d}\ {\rm d}z\ v^{-1} \ {\rm d}v \\
& {=}&  2 \ \int_{ r}^{ {s}} v^{1-d}\int_{0}^v \psi(u (z,{s},\theta )) \ z^{d-1} \ {\rm d}z \ {\rm d}v.
\ex
Thus the characterising semi-group equation  (\ref{eq-semigroupbessel2}) resp. (\ref{eq-semigroupbrownianmotion}) becomes
\bex
u({r},{s},\theta) = \theta - 2 \ \int_{ r}^{ {s}} v^{1-d}\int_{0}^v \psi(u ({z},{s},\theta )) \ z^{d-1} \ {\rm d}z \ {\rm d}v.
\ex
Differentiation in $r$ gives
\bex
\frac{\partial}{\partial r} u({r},{s},\theta) &=& 2 r^{1-d} \int_0^r \psi(u({z},{s},\theta)) z^{d-1} \ {\rm d}z ,\\ 
\frac{\partial^2}{\partial r^2} u({r},{s},\theta) &=& 2 (1-d) r^{-d} \int_0^r \psi(u({z},{s},\theta)) z^{d-1} \ {\rm d}z + 2  \psi(u({r},{s},\theta)). 
\ex
Hence, we obtain the differential equation in (\ref{eq-pdebessel1}), i.e. for $\theta\geq 0$, 
\bex
\frac{1}{2}\frac{\partial^2}{\partial r^2} u({r},{s},\theta) + \frac{d-1 }{2 r} \frac{\partial}{\partial r} u({r},{s},\theta) &=&  \psi(u({r},{s},\theta)) \ \ \ 0<r\leq {s},\\
u(r,r,\theta)&=&\theta. 
\ex

\bibliographystyle{elsarticle-harv}

\bibliography{myrefsradialsuppnew}

%% Authors are advised to submit their bibtex database files. They are
%% requested to list a bibtex style file in the manuscript if they do
%% not want to use elsarticle-harv.bst.

%% References without bibTeX database:

\end{document}